\newtheorem{theorem}{Theorem}
\newtheorem{lemma}[theorem]{Lemma}
\newtheorem{definition}[theorem]{Definition}
\numberwithin{equation}{section}
\author{\textsc{Xuelian Guo and Liming Tang\footnote{corresponding author}}\\
\small{School of Mathematical Sciences}\\
\small{Harbin Normal University}\\
\small{150025 Harbin, China}\\
\small{E-mail: limingtang@hrbnu.edu.cn}}
\date{ }
\date{ }
\begin{document}

\thispagestyle{empty}

\noindent{\Large
Maps on the mirror Heisenberg-Virasoro algebra, II}
 \footnote{
The first part of the work is supported by the NNSF of China (Nos: 12001141,11971134) and NSF of Hei Longjiang Province (No. JQ2020A002);
FCT   UIDB/00212/2020 and UIDP/00212/2020.
The second part of this work is supported by the Russian Science Foundation under grant 22-71-10001.
}

	\bigskip
	
	 \bigskip

\begin{center}	
	{\bf
		
    Xuelian Guo\footnote{School of Mathematical Sciences, Harbin Normal University, 150025 Harbin, China;\ guoxuelian@stu.hrbnu.edu.cn},
    Ivan Kaygorodov\footnote{CMA-UBI, Universidade da Beira Interior, Covilh\~{a}, Portugal;  Moscow Center for Fundamental and Applied Mathematics,      Russia;  Saint Petersburg  University, Russia;\ kaygorodov.ivan@gmail.com} \&
Liming Tang\footnote{School of Mathematical Sciences, Harbin Normal University, 150025 Harbin, China; \ limingtang@hrbnu.edu.cn}\footnote{corresponding author}}
\end{center}

 \begin{quotation}
{\small\noindent \textbf{Abstract}:
This is the second paper in our series of papers dedicated to the study of maps on the mirror Heisenberg-Virasoro algebra. The first paper is dedicated to the study of unary maps and the present paper is dedicated to the study of binary maps.
Namely, we describe
biderivations and left-symmetric algebra structures
 on the complex mirror Heisenberg-Virasoro algebra.

\medskip
 \vspace{0.05cm} \noindent{\textbf{Keywords}}:
 Lie algebra; left-symmetric algebra; biderivation

\medskip

\vspace{0.05cm} \noindent \textbf{Mathematics Subject Classification
2020}: 17B40, 17B65, 17B68}
\end{quotation}
 \medskip

\section*{Introduction}

One of the classical pure algebraic problems is the study of linear and bilinear maps on a certain algebra (see, for example, \cite{I19,PR}, survey \cite{k23}, and references therein).
The most famous examples of binary maps on algebras are biderivations. The study of biderivations on a Lie algebra can be traced back to the study of commutative maps on associative algebras. In 2009, D. Benkovič  described all biderivations on triangular algebras \cite{BT}.
Y. Q. Du and Y. Wang determined biderivations on the generalized matrix algebra in 2013  \cite{DY}.
Also in 2013, D. Y. Wang, and X. X. Yu characterized biderivations and linear commuting maps on the Schr\"{o}dinger-Virasoro algebra   \cite{WD}.
In 2017, X. Cheng, M. Wang, J. Sun, and H. Zhang described biderivations and linear commuting maps on the Lie algebra $\mathfrak{gca}$ \cite{CX}.
In 2017, biderivations, post-Lie algebra structures, and linear commutative maps on $W$-algebras were studied by X. M. Tang   \cite{XB}.
In 2018, X. W. Liu, X. Q. Guo, and K. M. Zhao described biderivations on the block Lie algebra  \cite{LX}.
In the same year, X. M. Tang studied biderivations and commutative post-Lie algebra structures on Lie algebras ${\mathcal W}(a, b)$  \cite{BC}.
Later,  X. M. Tang and X. T. Li determined biderivations on the twisted Heisenberg-Virasoro algebra   \cite{TX}.
Biderivations play an important role in describing Poisson structures on associative algebras \cite{kk}.
In 2024, H. J. Liu and Z. X. Chen determined all biderivations on the centreless mirror Heisenberg-Virasoro algebra through the action of the biderivations on the basis and then described the biderivations on the mirror Heisenberg-Virasoro algebra. As an application, the description of commutative linear maps was given \cite{CL}.
In the present paper, by using the isomorphism relationship between the mirror Heisenberg-Virasoro algebra and ${\mathcal W}(\frac12, 0),$ all biderivations on the mirror Heisenberg-Virasoro algebra are determined. Some applications of the biderivations are given: commutative linear maps and commutative post-Lie algebra structures on the mirror Heisenberg-Virasoro algebra, and biderivations on the graded mirror Heisenberg-Virasoro left-symmetric algebra are described.

\smallskip

The left-symmetric algebras or LSAs for short, play an important role in mathematics and physics. In 1857, A. Cayley first introduced left-symmetric algebras in the context of the root algebra \cite{AO3}. With the passage of time, E. B. Vinberg, J.-L. Koszul, and M. Gerstenhaber again introduced   left symmetric algebras in the 1960s.
LSAs also went by many different names, being called Vinberg algebra,  Koszul algebra, or pre-Lie algebra, among others. Later, LSAs has been widely concerned in mathematics, and its research has made great progress. It is closely related to vertex algebra, and it is also important in the affine theory  \cite{EB3,BD3,BD21}. The left symmetric algebraic structure on a Lie algebra originates from the theory of affine manifold  \cite{BD1}.
In 2011,  X. L. Kong,  H. J. Chen, and C. M. Bai determined the left-symmetric algebra structure on Witt algebra and Virasoro algebra  \cite{WV}.
In 2012, H. J. Chen and J. B. Li studied left-symmetric algebra structures on $W (2, 2)$  \cite{LS}. They also determined the left-symmetric algebra structure on the twisted Heisenberg-Virasoro algebra in 2014   \cite{CH}.
In 2022, C. K. Xu determined compatible left-symmetric algebra structures on the high-rank Witt algebra and Virasoro algebra (see \cite{XC1}).
In the present paper, we described compatible left-symmetric algebra structures on the mirror Heisenberg-Virasoro algebra.

\smallskip

In this paper, we study biderivations and compatible left-symmetric algebra structures on the complex mirror Heisenberg-Virasoro algebra ({\rm mHV} algebra).
It is a Lie algebra with a basis
$\{d_{m}, h_{r}, {\bf c, l}\ |\ m\in \mathbb{Z}, r\in \frac{1}{2}+ \mathbb{Z}\}$
 and the following multiplication table (as usual, all zero multiplications  will be omitted):
\[[d_{m}, d_{n}]=(m-n)d_{m+n}+\frac{m^{3}-m}{12}\delta_{m+n}, _{0}{\bf c},\
[d_{m}, h_{r}]=-rh_{m+r},\
[h_{r}, h_{s}]=r\delta_{r+s,0}{\bf l},\]
where  $m, n\in \mathbb{Z}$, $r,s\in \frac{1}{2}+\mathbb{Z}$.
The mirror Heisenberg-Virasoro algebra is a $\mathbb{Z}$-graded algebra.
Virasoro algebra is a subalgebra of {\rm mHV} algebra.
 The mirror  Heisenberg-Virasoro algebra,  whose structure is similar to that of the twisted Heisenberg-Virasoro algebra, is the even part of mirror $N = 2$ superconformal algebra \cite{BK}. As it is well known, representation theory is essential in the research of Lie algebras. The Whittaker modules and the tensor products of Whittaker modules on {\rm mHV}  algebra, which are non-weight modules, are determined. Sufficient and necessary conditions are given for these non-weight modules on {\rm mHV}  algebra to be irreducible  \cite{WE}. Also, the tensor product weight modules on {\rm mHV}  algebra are studied, and some examples of irreducible weight modules on {\rm mHV}  algebra are given in \cite{EH}. All Harish-Chandra modules on {\rm mHV}  algebra are classified in \cite{ET}.
 Derivations, $2$-local derivations and $\frac{1}{2}$-derivations on  {\rm mHV}  algebra are classified in \cite{ES,GKT}.

\smallskip

The structure of this paper is as follows.
Firstly, by using the known biderivations on ${\mathcal W}(\frac12,0)$, all biderivations on the mirror Heisenberg-Virasoro algebra are determined.
Secondly, its applications are given. One application is different from the method proved by M. Liang, J. Lou, and S. L. Gao  \cite{MH}, the commutative linear map is determined by the definition of the biderivation. Another application is to prove that post-Lie algebra structures on the mirror Heisenberg-Virasoro algebra are trivial.
Third, the relationship between the coefficient functions of the graded conditions required to satisfy the mirror Heisenberg-Virasoro algebra without a center is determined. These relationships are used to determine concrete expressions. Finally,   the left-symmetric algebra structure on the mirror Heisenberg-Virasoro algebra is determined.
 In the end, it is proved that biderivations on the left-symmetric algebra of the mirror Heisenberg-Virasoro algebra are trivial.

\section{Biderivations on {\rm mHV} algebra}

 \begin{definition}
     Let $(\mathfrak L, [\cdot,\cdot])$ be a Lie algebra and
     $f$ is a bilinear map on $\mathfrak L$.
     If $f$  is a derivation of both components, that is, for every $x\in \mathfrak L$, there exist linear maps $\phi_x,\ \psi_x$ of $\mathfrak L$ to itself,  such that $\phi_x=f(x,\cdot),\ \psi_x=f(\cdot, x)$ are   derivations on $\mathfrak L$, i.e.
\begin{longtable}{lcl}
  $f([x, y], z)$&$=$&$[f(x,z), y]+[x,  f(y,z)],$\\
  $f(x, [y, z])$&$=$&$[f(x,y), z]+[y, f(x, z)].$
\end{longtable}
\noindent Then $f$ is called a biderivation of $\mathfrak L$.
  If  $f(x,y)=\lambda[x, y]$ for an element $\lambda \in \mathbb C$, then $f$ is an inner biderivation.
 Denote by ${\rm B}(\mathfrak L)$ the set of all biderivations on $\mathfrak L$.
 \end{definition}

\begin{lemma}(see, \cite{TX})\label{4.2}
Let $\mathfrak L$ be a perfect Lie algebra and $f$ is a biderivation of $\mathfrak L$. If $\alpha \in Z(\mathfrak  L)$, then  $f(x, \alpha)=f(\alpha, x)=0$ for all $x\in\mathfrak  L$.
\end{lemma}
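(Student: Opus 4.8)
The plan is to combine two observations: first, that for each fixed element the partial maps $f(x,\cdot)$ and $f(\cdot,x)$ are ordinary derivations of $\mathfrak{L}$; and second, that every derivation of a Lie algebra carries the center into the center. Granting these two facts, the perfectness hypothesis $\mathfrak{L}=[\mathfrak{L},\mathfrak{L}]$ finishes the argument almost immediately.

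First I would record the auxiliary fact that if $D$ is any derivation of $\mathfrak{L}$ and $\alpha\in Z(\mathfrak{L})$, then $D(\alpha)\in Z(\mathfrak{L})$. Indeed, for an arbitrary $w\in\mathfrak{L}$ the Leibniz rule gives $[D(\alpha),w]=D([\alpha,w])-[\alpha,D(w)]$, and both terms on the right vanish because $\alpha$ is central (the first since $[\alpha,w]=0$, the second since $[\alpha,D(w)]=0$). Hence $D(\alpha)$ commutes with every $w$, i.e. $D(\alpha)\in Z(\mathfrak{L})$. Applying this to the derivations $D=f(\cdot,x)$ and $D=f(x,\cdot)$, which are derivations precisely by the defining identities of a biderivation, I conclude that $f(\alpha,x)\in Z(\mathfrak{L})$ and $f(x,\alpha)\in Z(\mathfrak{L})$ for every $x\in\mathfrak{L}$.

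Next I would invoke perfectness: since $\mathfrak{L}=[\mathfrak{L},\mathfrak{L}]$, an arbitrary $x$ can be written as a finite sum $x=\sum_i[y_i,z_i]$. Expanding $f(\alpha,x)$ through the derivation property in the second argument gives $f(\alpha,x)=\sum_i f(\alpha,[y_i,z_i])=\sum_i\bigl([f(\alpha,y_i),z_i]+[y_i,f(\alpha,z_i)]\bigr)$, and each summand vanishes because, by the previous step, $f(\alpha,y_i)$ and $f(\alpha,z_i)$ are central and therefore bracket trivially with $z_i$ and $y_i$. Thus $f(\alpha,x)=0$. The identity $f(x,\alpha)=0$ follows symmetrically, this time expanding $x=\sum_i[y_i,z_i]$ in the first argument and using that $f(y_i,\alpha)$ and $f(z_i,\alpha)$ are central.

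The argument is essentially formal, so I do not expect a serious obstacle; the one point demanding care is the auxiliary fact, whose proof genuinely uses that $\alpha$ is central against \emph{both} $w$ and $D(w)$. If one knew only $[\alpha,w]=0$ but not $[\alpha,D(w)]=0$, the conclusion $D(\alpha)\in Z(\mathfrak{L})$ would fail, so this is the step where the hypotheses must be used honestly. Everything else reduces to bookkeeping with the two Leibniz identities together with the decomposition furnished by $\mathfrak{L}=[\mathfrak{L},\mathfrak{L}]$.
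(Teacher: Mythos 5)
Your proof is correct. Note that the paper itself gives no proof of this lemma at all --- it is imported from the cited reference on biderivations of the twisted Heisenberg--Virasoro algebra --- and your argument is precisely the standard one from that source: the partial maps $f(x,\cdot)$ and $f(\cdot,x)$ are derivations, derivations carry $Z(\mathfrak{L})$ into $Z(\mathfrak{L})$, and then writing $x=\sum_i[y_i,z_i]$ via perfectness and expanding with the two Leibniz identities kills every term. So your write-up correctly supplies exactly the argument the paper leaves to the citation, with no gaps.
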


\begin{lemma}(see, \cite{TX})
  Let $\mathfrak  L$ be a perfect Lie algebra and $Z'(\mathfrak  L)\subset Z(\mathfrak
 L)$, then a  linear map
     $\pi\colon {\rm B}(\mathfrak L)\rightarrow {\rm B}(\mathfrak L/Z'(\mathfrak L)),$
defined by
\begin{center}$\pi(f)(x+Z'(\mathfrak L), y+Z'(\mathfrak L))= f(x, y)+Z'(\mathfrak L)$
 \end{center} is injective.
 Moreover, $\pi$ is bijective if and only if a biderivation of $\mathfrak L/Z'(\mathfrak L)$ can be extended to a biderivation of $\mathfrak L$.
\end{lemma}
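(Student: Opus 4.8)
The plan is to verify, in order, that $\pi$ is well defined, that its image lies in ${\rm B}(\mathfrak{L}/Z'(\mathfrak{L}))$, that it is injective, and finally to reinterpret surjectivity as the claimed extension property. Note first that $Z'(\mathfrak{L})$, being a subspace of the center, satisfies $[\mathfrak{L},Z'(\mathfrak{L})]=0\subseteq Z'(\mathfrak{L})$, so it is an ideal and the quotient Lie algebra is well defined, with bracket $[\bar{x},\bar{y}]=\overline{[x,y]}$ where $\bar{x}=x+Z'(\mathfrak{L})$. The workhorse throughout is Lemma \ref{4.2}: since $Z'(\mathfrak{L})\subseteq Z(\mathfrak{L})$, any biderivation $f$ annihilates $Z'(\mathfrak{L})$ in either argument.

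First I would check well-definedness. I must show $f(x,y)+Z'(\mathfrak{L})$ is independent of the chosen representatives. If $a,b\in Z'(\mathfrak{L})$, bilinearity gives
\[
f(x+a,y+b)=f(x,y)+f(a,y)+f(x,b)+f(a,b),
\]
and Lemma \ref{4.2} forces the last three terms to vanish because $a,b$ are central. Hence $f(x+a,y+b)=f(x,y)$, which is in fact stronger than equality modulo $Z'(\mathfrak{L})$, so $\pi(f)$ is a well-defined bilinear map. That $\pi(f)$ is a biderivation is then a direct transcription of the two defining identities: using $[\bar{x},\bar{y}]=\overline{[x,y]}$ one computes $\pi(f)([\bar{x},\bar{y}],\bar{z})=\overline{f([x,y],z)}$ and $[\pi(f)(\bar{x},\bar{z}),\bar{y}]+[\bar{x},\pi(f)(\bar{y},\bar{z})]=\overline{[f(x,z),y]+[x,f(y,z)]}$, which coincide since $f$ is a biderivation of $\mathfrak{L}$; the second defining identity is checked symmetrically. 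Linearity of $\pi$ is immediate from the definition.

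For injectivity, suppose $\pi(f)=0$, i.e. $f(x,y)\in Z'(\mathfrak{L})$ for all $x,y$. Then for any $z$ the element $f(x,z)$ is central, so $[f(x,z),y]=0$ and likewise $[x,f(y,z)]=0$; the biderivation identity $f([x,y],z)=[f(x,z),y]+[x,f(y,z)]$ therefore yields $f([x,y],z)=0$. Since $\mathfrak{L}$ is perfect, $[\mathfrak{L},\mathfrak{L}]=\mathfrak{L}$, so by linearity in the first slot $f(w,z)=0$ for every $w,z\in\mathfrak{L}$, that is $f=0$. This step, which genuinely uses both perfectness and centrality, is what I regard as the crux of the argument.

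Finally, for the stated equivalence: since $\pi$ is injective, it is bijective precisely when it is surjective. By construction $\pi(f)=g$ means exactly that the biderivation $g$ of $\mathfrak{L}/Z'(\mathfrak{L})$ is induced by the biderivation $f$ of $\mathfrak{L}$ via $f(x,y)+Z'(\mathfrak{L})=g(\bar{x},\bar{y})$, i.e. $g$ extends to $f$. Thus surjectivity of $\pi$ is precisely the assertion that every biderivation of $\mathfrak{L}/Z'(\mathfrak{L})$ can be extended to a biderivation of $\mathfrak{L}$, giving the equivalence. No real obstacle arises here beyond unwinding the definition of ``extension''; the only substantive work is the injectivity step above, which rests squarely on Lemma \ref{4.2}.
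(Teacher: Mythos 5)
Your proof is correct. Note that the paper itself gives no argument for this lemma---it is quoted from \cite{TX}---so there is nothing to compare line by line; your write-up simply supplies the standard proof that the citation points to: well-definedness of $\pi(f)$ via Lemma \ref{4.2} (biderivations of a perfect algebra annihilate central elements), the biderivation identities passing to the quotient, injectivity from $f([x,y],z)=[f(x,z),y]+[x,f(y,z)]=0$ together with $[\mathfrak L,\mathfrak L]=\mathfrak L$, and the ``moreover'' clause as a direct unwinding of surjectivity. This matches the argument in the cited source, with the injectivity step correctly identified as the only place where perfectness and centrality are genuinely used.
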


Let $\mathfrak L=\widetilde{\mathfrak L}\oplus Z'(\mathfrak L)$, then $\widetilde{\mathfrak
 L}\cong \mathfrak L/Z'(\mathfrak L)$.
 As follows from the two lemmas above, if $\mathfrak L$ is perfect,
 for describing all biderivations on $\mathfrak L$,
 we  need to draw all biderivations on $\widetilde{\mathfrak L}$.

\subsection{Biderivations on {\rm mHV} algebra}

 Let $\mathfrak{D}$ be the complex mirror Heisenberg-Virasoro algebra
 (to simplify, we will fix this notation for the rest of the paper).
 Then $\mathfrak{D}$ is perfect,
 $\mathfrak{D}=\widetilde{\mathfrak{D}}\oplus Z(\mathfrak{D})$, where $Z(\mathfrak{D})= \langle \bf c, \bf l \rangle$, $\widetilde{\mathfrak{D}}$ is a Lie algebra with a basis
 $\{d_{n}, h_{n+\frac{1}{2}}\ |\ n\in \mathbb{Z}\}$ and the following multiplication table:
 \[
  [d_m, d_n]=(m-n)d_{m+n},\ [d_m, h_{n+\frac12}]=-(n+\frac{1}{2})h_{m+n+\frac{1}{2}}.
 \]

By virtue of \cite{GS}, $\widetilde{\mathfrak{D}}={\rm W}'(0,0)\cong {\mathcal W}(\frac{1}{2},0)$,  where ${\mathcal W}(\frac{1}{2},0)= \langle d_n, h_n\ |\ n\in\mathbb{Z}\rangle$, and satisfies
  \begin{center}
 $[d_m, d_n]=(m-n)d_{m+n},\ [d_m, h_n]=-(n+\frac{1}{2})h_{m+n}.$
  \end{center}

\begin{lemma}(see, \cite{BC})
Let $f$ be a biderivation of ${\mathcal W}(\frac{1}{2},0)$, then any $x, y\in{\mathcal W}(\frac{1}{2},0)$, $f(x, y)=\lambda[x, y]+\widetilde{\Upsilon}_{\Omega}(x,y)$, and $\widetilde{\Upsilon}_{\Omega}$ satisfies
\begin{longtable}{lcl}
  $\widetilde{\Upsilon}_{\Omega}(d_{m},d_{n})$&$=$&$\sum_{k\in\mathbb{Z}}(k+\frac{1}{2})\mu_{k}h_{m+n+k},$\\
  $\widetilde{\Upsilon}_{\Omega}(d_{m},h_{n})$&$=$&$\widetilde{\Upsilon}_{\Omega}(h_{m},d_{n})\ =\ \widetilde{\Upsilon}_{\Omega}(h_{m},h_{n})\ = \ 0,$
\end{longtable}
where $\lambda\in\mathbb{C}$ and $\Omega=(\mu_{k})_{k\in\mathbb{Z}}$ which contains only finitely many
nonzero entries.
\end{lemma}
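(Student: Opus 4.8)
The plan is to use the structural decomposition $\mathcal{W}(\tfrac12,0)=V\ltimes H$, where $V=\langle d_n\mid n\in\mathbb{Z}\rangle$ is the Witt algebra and $H=\langle h_n\mid n\in\mathbb{Z}\rangle$ is an abelian ideal, a $V$-module via $[d_m,h_n]=-(n+\tfrac12)h_{m+n}$; note that $\mathcal{W}(\tfrac12,0)$ is perfect with trivial centre. The engine of the argument is the fundamental commutator identity for a biderivation,
\begin{equation*}
 [f(x,y),[u,v]]=[[x,y],f(u,v)],
\end{equation*}
valid for all $x,y,u,v$, which I would obtain by evaluating $f([x,y],[u,v])$ once as a derivation in the first slot and once as a derivation in the second slot and simplifying with the Jacobi identity. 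I will also use freely that $f(\cdot,z)$ and $f(z,\cdot)$ are derivations, together with the $\mathbb{Z}$-grading $\deg d_n=\deg h_n=n$.

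First I would pin down $f$ on the Witt block. Writing $f(d_m,d_n)=A(m,n)+B(m,n)$ with $A(m,n)\in V$ and $B(m,n)\in H$, I feed $(d_m,d_n)$ and $(d_s,d_t)$ into the fundamental identity; since $[V,V]\subseteq V$ and $[V,H]\subseteq H$, the $V$- and $H$-components decouple. The $V$-component reproduces exactly the biderivation identity of the Witt algebra for $A$, and the classical rigidity of the Witt algebra (all of whose biderivations are inner) forces $A(m,n)=\lambda(m-n)d_{m+n}$ for a single $\lambda\in\mathbb{C}$. For the $H$-component I expand $B(m,n)=\sum_p b_p(m,n)h_{m+n+p}$ and project both slot-derivation identities onto $H$: the first shows $b_p$ is independent of its first argument, and substituting this into the second shows $b_p$ is constant (the relevant half-integer weights never vanish, so the divisions are legitimate). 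Hence $B(m,n)=\sum_p(p+\tfrac12)\mu_p h_{m+n+p}$, with only finitely many $\mu_p\neq0$ because $f(d_0,d_0)$ is a finite combination of basis vectors; this is $\lambda[d_m,d_n]+\widetilde{\Upsilon}_{\Omega}(d_m,d_n)$.

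The remaining three blocks follow from the fundamental identity by direct computation. Since $H$ is abelian, $[f(h_m,h_n),[u,v]]=[[h_m,h_n],f(u,v)]=0$ for all $u,v$; perfectness then gives $f(h_m,h_n)\in Z(\mathcal{W}(\tfrac12,0))=0$. For $f(d_m,h_n)$, the identity tested against $[d_s,d_t]$ first kills the $V$-part of the image (it must commute with all $d_l$) and then, matching $H$-coefficients, forces $f(d_m,h_n)=-(n+\tfrac12)\lambda h_{m+n}=\lambda[d_m,h_n]$ with the \emph{same} $\lambda$ as on the Witt block; the case $f(h_m,d_n)=\lambda[h_m,d_n]$ is symmetric. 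Subtracting the inner biderivation $\lambda[\cdot,\cdot]$, the difference $\widetilde{\Upsilon}_{\Omega}:=f-\lambda[\cdot,\cdot]$ then vanishes on the mixed and $H\times H$ blocks and equals the stated cocycle on $V\times V$, proving the claim.

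The step I expect to be the genuine obstacle is the Witt block: one must both establish the fundamental identity cleanly and import the rigidity of biderivations of the Witt algebra, since this is the only place where real rigidity (as opposed to routine grading bookkeeping) enters. A pleasant feature, which I would emphasise, is that once the fundamental identity is available the scalar $\lambda$ propagates automatically from the $d$--$d$ block to the mixed blocks, so no separate argument is needed to match the constants across the decomposition.
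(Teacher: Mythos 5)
The paper contains no proof of this lemma to compare against: it is imported verbatim from \cite{BC} (Tang's determination of biderivations of ${\mathcal W}(a,b)$), so your blind attempt is being measured against a citation, not an argument. That said, your reconstruction is correct, and it follows the same general strategy as that cited literature: the Bre\v{s}ar-type fundamental identity $[f(x,y),[u,v]]=[[x,y],f(u,v)]$, the semidirect decomposition ${\mathcal W}(\tfrac12,0)=V\ltimes H$ with $H$ an abelian ideal, and grading bookkeeping in which the never-vanishing half-integer weights $n+\tfrac12$ legitimize every division. Your handling of the $H\times H$ block (fundamental identity, perfectness, triviality of the centre), of the mixed blocks (the $V$-part of $f(d_m,h_n)$ centralizes every $d_k$, hence vanishes, and coefficient matching forces the \emph{same} $\lambda$ as on the Witt block), and of the $H$-component of the $d$--$d$ block (the two slot-derivation identities force $b_p(m,n)$ first to be independent of $m$, then constant, with finiteness read off from $f(d_0,d_0)$) all check out against direct computation and reproduce exactly the stated form $\lambda[\cdot,\cdot]+\widetilde{\Upsilon}_{\Omega}$.

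Two caveats, neither fatal. First, a small imprecision: to invoke the rigidity of Witt biderivations you need $A=\pi\circ f|_{V\times V}$ (where $\pi$ is the projection onto $V$ along $H$) to be a biderivation of the Witt algebra, and this follows by projecting the two slot-derivation identities onto $V$ --- legitimate because $H$ is an ideal and $V$ a complementary subalgebra, so $\pi$ is a Lie homomorphism --- not from the $V$-component of the fundamental identity, as your text suggests; the fundamental identity is only a consequence of the biderivation property and is not the hypothesis of the rigidity theorem you cite. This is a one-line fix. Second, your argument is exactly as self-contained as the imported statement that all biderivations of the Witt algebra are inner; that result is indeed established in the literature (and is essentially what \cite{BC} proves en route to the quoted lemma), so the reliance is legitimate, but it is the sole nontrivial external input, as you yourself flag.
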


The last observation trivially gives the key statement of the present section.

\begin{lemma}\label{4.5}
 Let  $\tilde{f}$ be a biderivation of $\widetilde{\mathfrak{D}}$, then  we have $\tilde{f}(x, y)=\lambda[x, y]+\Upsilon_{\Omega}(x,y)$,  and $\Upsilon_{\Omega}$ satisfies
 \begin{longtable}{lcl}
    $\Upsilon_{\Omega}(d_{m},d_{n})$&$=$&$\sum_{k\in\mathbb{Z}}(k+\frac{1}{2})\mu_{k}h_{m+n+k+\frac{1}{2}},$\\
  $\Upsilon_{\Omega}(d_{m},h_{n+\frac{1}{2}})$&$=$&$\Upsilon_{\Omega}(h_{m+\frac{1}{2}},d_{n})
  =\Upsilon_{\Omega}(h_{m+\frac{1}{2}},h_{n+\frac{1}{2}})=0,$
  \end{longtable}
  where $\lambda\in\mathbb{C}$ and $\Omega=(\mu_{k})_{k\in\mathbb{Z}}$ which contains only finitely many
nonzero entries.

\end{lemma}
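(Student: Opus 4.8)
The plan is to exploit the explicit isomorphism $\widetilde{\mathfrak{D}}\cong\mathcal{W}(\tfrac12,0)$ recorded just above and to transport the known description of biderivations across it. Concretely, I would first pin down the isomorphism at the level of basis vectors: the map $\sigma\colon\mathcal{W}(\tfrac12,0)\to\widetilde{\mathfrak{D}}$ sending $d_n\mapsto d_n$ and $h_n\mapsto h_{n+\frac12}$ is a Lie algebra isomorphism, since under $\sigma$ the relation $[d_m,h_n]=-(n+\tfrac12)h_{m+n}$ in $\mathcal{W}(\tfrac12,0)$ becomes exactly $[d_m,h_{n+\frac12}]=-(n+\tfrac12)h_{m+n+\frac12}$ in $\widetilde{\mathfrak{D}}$, while the $d$-bracket is left unchanged.

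Next I would invoke the naturality of biderivations with respect to isomorphisms: if $\tilde f$ is a biderivation of $\widetilde{\mathfrak{D}}$, then $f:=\sigma^{-1}\circ\tilde f\circ(\sigma\times\sigma)$ is a biderivation of $\mathcal{W}(\tfrac12,0)$, and the assignment $\tilde f\mapsto f$ is a bijection. This follows directly from the two defining identities, because $\sigma$ intertwines the two brackets, so each occurrence of $[\cdot,\cdot]$ and of $\tilde f$ can be pushed through $\sigma^{-1}$ without cost. Applying the preceding lemma to $f$ then gives $f(x,y)=\lambda[x,y]+\widetilde{\Upsilon}_{\Omega}(x,y)$ for some $\lambda\in\mathbb{C}$ and some finitely supported family $\Omega=(\mu_k)_{k\in\mathbb{Z}}$.

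Finally I would push this formula forward through $\sigma$. Writing $\tilde f=\sigma\circ f\circ(\sigma^{-1}\times\sigma^{-1})$ and using that $\sigma$ is a homomorphism, the inner part transports to $\lambda[x,y]$ unchanged, while the remaining part becomes $\Upsilon_{\Omega}(x,y):=\sigma\bigl(\widetilde{\Upsilon}_{\Omega}(\sigma^{-1}x,\sigma^{-1}y)\bigr)$. Evaluating on basis elements and carrying the index shift through $\sigma$, one gets $\Upsilon_{\Omega}(d_m,d_n)=\sigma\bigl(\sum_{k}(k+\tfrac12)\mu_k h_{m+n+k}\bigr)=\sum_{k}(k+\tfrac12)\mu_k h_{m+n+k+\frac12}$, whereas $\Upsilon_{\Omega}(d_m,h_{n+\frac12})=\sigma(\widetilde{\Upsilon}_{\Omega}(d_m,h_n))=0$, and likewise for the $(h,d)$ and $(h,h)$ slots. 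This reproduces exactly the claimed formulas, and the bijectivity of $\tilde f\mapsto f$ guarantees that every biderivation of $\widetilde{\mathfrak{D}}$ is of this shape.

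The only real obstacle is bookkeeping: one must be scrupulous about the half-integer relabeling $h_n\leftrightarrow h_{n+\frac12}$ so that the shift surfaces precisely in the $(d,d)$ output and in no other slot. Beyond this careful tracking of indices there is no genuine difficulty, which is exactly why the statement is a trivial consequence of the isomorphism together with the preceding lemma.
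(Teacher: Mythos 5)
Your proof is correct and follows essentially the same route as the paper: the paper states this lemma as a trivial consequence of the isomorphism $\widetilde{\mathfrak{D}}\cong\mathcal{W}(\tfrac12,0)$ (from \cite{GS}) together with the cited description of biderivations of $\mathcal{W}(\tfrac12,0)$ (from \cite{BC}), offering no further written argument. What you have done is simply spell out the details the paper leaves implicit --- the explicit basis-level isomorphism $\sigma$, the transport of biderivations across it, and the index bookkeeping --- all of which check out.
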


\begin{theorem}\label{4.6}
  A bilinear map  $f$ is  a biderivation of $\mathfrak{D}$ if and only if there exists $\lambda\in\mathbb{C}$, such that for any $x,y\in\mathfrak{D}$, $f(x, y)=\lambda[x, y]+\Upsilon_{\Omega}(x,y)$, and $\Upsilon_{\Omega}$ satisfies
 \begin{longtable}{lcl}
    $\Upsilon_{\Omega}(d_{m},d_{n})$ &$ =$&$\sum_{k\in\mathbb{Z}}(k+\frac{1}{2})\mu_{k}h_{m+n+k+\frac{1}{2}},$\\
  $\Upsilon_{\Omega}(d_{m},h_{n+\frac{1}{2}})$ & $=$&$\Upsilon_{\Omega}(h_{m+\frac{1}{2}},d_{n})
\  =\ \Upsilon_{\Omega}(h_{m+\frac{1}{2}},h_{n+\frac{1}{2}})\ =\ 0,$
  \end{longtable}
  where $\Omega=(\mu_{k})_{k\in\mathbb{Z}}$ which contains only finitely many
nonzero entries.
 \begin{proof}
   Adequacy is clearly established, and necessity is proved below.
   Let $f$ be a biderivation of $\mathfrak{D}$, we define a linear map
   $\pi\colon {\rm B}(\mathfrak{D})\rightarrow {\rm B}(\mathfrak{D}/Z(\mathfrak{D})),$
   that satisfies
   \[\pi(f)(x+Z(\mathfrak{D}), y+Z(\mathfrak{D}))=f(x, y)+Z(\mathfrak{D}),\]
   where $Z(\mathfrak{D})=  \langle {\bf c,l}\rangle$. Notice that $\mathfrak{D}/Z(\mathfrak{D})\cong\widetilde{\mathfrak{D}}$, then by virtue of Lemma \ref{4.5}, put
   \[f(x, y)=\lambda[x, y]+\Upsilon_{\Omega}(x,y), \]
    where $\lambda\in\mathbb{C}$ and $ x, y\in \langle d_i, h_{i+\frac{1}{2}}\ |\ i\in\mathbb{Z}\rangle$.
    According to lemma \ref{4.2}, for
    $\alpha,  \beta\in
    \langle {\bf c,l}\rangle,$ we have
    \begin{longtable}{lcl}
      $f(x+\alpha, y+\beta)$&$=$&$f(x, y)\ =\ \lambda[x, y]+\Upsilon_{\Omega}(x,y)\ = \ $ \\
                          &$=$ &$\lambda[x+\alpha, y+\beta]+\Upsilon_{\Omega}(x+\alpha, y+\beta).$
    \end{longtable}
    Therefore, the conclusion holds.
 \end{proof}
\end{theorem}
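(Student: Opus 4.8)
The plan is to leverage the central decomposition $\mathfrak{D} = \widetilde{\mathfrak{D}} \oplus Z(\mathfrak{D})$ together with the projection machinery and Lemma \ref{4.5}, so that the classification on $\mathfrak{D}$ becomes essentially a transfer of the classification already obtained on the centreless algebra $\widetilde{\mathfrak{D}}$. I would treat the two implications separately: sufficiency by a direct (if tedious) verification that the displayed bilinear map obeys both biderivation identities, and necessity by descending a given biderivation to the quotient $\mathfrak{D}/Z(\mathfrak{D}) \cong \widetilde{\mathfrak{D}}$ and then applying the already-proved structure theorem there.

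For necessity, let $f \in {\rm B}(\mathfrak{D})$. Since $\mathfrak{D}$ is perfect and $Z(\mathfrak{D}) = \langle \mathbf{c}, \mathbf{l}\rangle$, Lemma \ref{4.2} gives $f(x,\alpha) = f(\alpha, x) = 0$ for every central $\alpha$; hence $f(x+\alpha, y+\beta) = f(x,y)$ for all $\alpha,\beta \in Z(\mathfrak{D})$, and $f$ induces a well-defined biderivation $\bar{f} = \pi(f)$ on $\mathfrak{D}/Z(\mathfrak{D})$. Under the isomorphism $\mathfrak{D}/Z(\mathfrak{D}) \cong \widetilde{\mathfrak{D}}$, Lemma \ref{4.5} forces $\bar{f}(x,y) = \lambda[x,y] + \Upsilon_{\Omega}(x,y)$ for a unique $\lambda \in \mathbb{C}$ and a finitely supported $\Omega$. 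Pulling this back along $\pi$ — and using $f(x+\alpha,y+\beta) = f(x,y)$ once more to absorb the central parts of the arguments — yields the claimed formula on all of $\mathfrak{D}$.

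For sufficiency, I would check the two defining identities on basis triples, extending $\Upsilon_{\Omega}$ by zero whenever a central element is fed in (consistent with Lemma \ref{4.2}). The inner summand $\lambda[\cdot,\cdot]$ is a biderivation on any Lie algebra straight from the Jacobi identity, so only $\Upsilon_{\Omega}$ needs attention. Most triples contain an $h$-generator and collapse immediately, since $\Upsilon_{\Omega}$ vanishes there; the one genuinely nontrivial case is three degree-generators $d_m, d_n, d_p$, where the central cocycle term in $[d_m,d_n]$ is annihilated and both sides reduce to $(m-n)\sum_{k}(k+\tfrac12)\mu_k\, h_{m+n+p+k+\frac12}$ after a short index cancellation.

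The main obstacle is not any single computation but the bookkeeping that guarantees the descent-then-lift argument is reversible, i.e. that the biderivation $\bar{f}$ of the quotient genuinely extends to $\mathfrak{D}$ (the bijectivity clause of the injectivity lemma). The delicate point is the central $2$-cocycle: when one lifts $\lambda[\cdot,\cdot]$ from the quotient, where brackets carry no central part, back to $\mathfrak{D}$, one must confirm that the reappearing $\frac{m^3-m}{12}\delta_{m+n,0}\mathbf{c}$ and $r\delta_{r+s,0}\mathbf{l}$ terms never obstruct the biderivation identities; Lemma \ref{4.2}, which kills every central input, is exactly what makes these terms inert.
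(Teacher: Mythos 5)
Your necessity argument follows the paper's proof almost line for line (Lemma \ref{4.2} to kill central inputs, descent to $\mathfrak{D}/Z(\mathfrak{D})\cong\widetilde{\mathfrak{D}}$, Lemma \ref{4.5}, lift back), but your sufficiency plan contains a genuine and fatal gap: the claim that every basis triple containing an $h$-generator ``collapses immediately, since $\Upsilon_{\Omega}$ vanishes there'' is false. $\Upsilon_{\Omega}$ vanishes on $h$-\emph{inputs}, but its \emph{output} on a pair of $d$'s is a combination of $h$'s, and in $\mathfrak{D}$ --- unlike in $\widetilde{\mathfrak{D}}$, where your reasoning would be correct --- two $h$'s bracket to a multiple of $\mathbf{l}$. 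Test the first biderivation identity on $(x,y,z)=(d_m,h_r,d_p)$: the left-hand side is
\[
\Upsilon_{\Omega}([d_m,h_r],d_p)=-r\,\Upsilon_{\Omega}(h_{m+r},d_p)=0,
\]
while the right-hand side is
\[
[\Upsilon_{\Omega}(d_m,d_p),h_r]+[d_m,\Upsilon_{\Omega}(h_r,d_p)]
=\sum_{k\in\mathbb{Z}}\Big(k+\tfrac12\Big)\Big(m+p+k+\tfrac12\Big)\mu_k\,\delta_{m+p+k+r+\frac12,\,0}\,\mathbf{l}.
\]
If some $\mu_{k_0}\neq0$, choose $r=-(m+p+k_0+\tfrac12)$; the right-hand side becomes $(k_0+\tfrac12)(m+p+k_0+\tfrac12)\mu_{k_0}\mathbf{l}\neq0$, since both scalar factors are nonzero half-integers. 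So $\lambda[\cdot,\cdot]+\Upsilon_{\Omega}$ with $\Omega\neq0$ is \emph{not} a biderivation of $\mathfrak{D}$. Lemma \ref{4.2} makes central inputs harmless; it says nothing about central values of brackets, which is exactly where $[h_r,h_s]=r\delta_{r+s,0}\mathbf{l}$ bites. Your closing assertion that this term is ``inert'' is the precise point of failure.

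Note that the paper never performs this check either (its proof opens with ``adequacy is clearly established''), so you have not diverged from the paper; you have made explicit the step it skipped, and that step cannot be completed. Indeed, the same computations show the statement itself only holds with $\Omega=0$: writing a genuine biderivation $f$ as $\lambda[\cdot,\cdot]+\Upsilon_{\Omega}+z$ with $z$ central-valued (the lift from the quotient determines $f$ only up to such a $z$ --- a second point your proposal gestures at but does not settle), the identity $f(h_s,[d_n,d_p])=[f(h_s,d_n),d_p]+[d_n,f(h_s,d_p)]$ gives $(n-p)z(h_s,d_{n+p})=0$, hence $z(h_s,\cdot)=0$; feeding that into the displayed identity above forces $\mu_k=0$ for all $k$, and the remaining central corrections die by the same perfectness trick. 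So on $\mathfrak{D}$ only the inner part survives: the $\Upsilon_{\Omega}$-family exists on the centreless algebra $\widetilde{\mathfrak{D}}$ but does not lift through the central extension. No repair of your verification can rescue the ``if'' direction in its present form.
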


\subsection{Applications of   biderivations on {\rm mHV} algebra}

\begin{definition}
   A linear  map $\phi$ on a Lie algebra $\mathfrak L$ is called
   a commuting map, if  $[\phi(x), x]=0$.
\end{definition}

If $\phi$ is a linear commuting map, then $[\phi(x), y]=[x, \phi(y)]$.
Put $f(x, y)=[\phi(x), y]=[x, \phi(y)]$, then $f$ is a biderivation of $\mathfrak  L$.

\begin{theorem}
A linear map $\phi$ on $\mathfrak{D}$ is commutative
if and only if there exists $\lambda\in\mathbb{C}$ and a linear map  $\tau\colon
  \mathfrak{D}\rightarrow Z(\mathfrak{D})$, such that $\phi(x)=\lambda x+\tau(x).$

  \begin{proof}
    Sufficiency:
$[\phi(x), x]=[\lambda x+\tau(x), x]=0.$
    Hence, $\phi$ is a linear commuting map.
    Conversely, let $f(x, y)= [\phi(x), y]$, then $f$ is a biderivation of  $\mathfrak{D}$. According to Theorem \ref{4.6}, we have
    \[
    f(x, y)=[\phi(x), y]=\lambda [x, y]+\Upsilon_{\Omega}(x,y).
    \]

  \noindent  Since $f(x,x)=0$, then $f(x,y)=-f(y,x)$, i.e.
    \[\lambda [x, y]+\Upsilon_{\Omega}(x,y)=-\lambda [y,x]-\Upsilon_{\Omega}(y,x).\]
    Therefore, $\Upsilon_{\Omega}(x,y)=0.$
    So    $[\phi(x)-\lambda x, y]=0.$
    Hence,\  $\phi(x)-\lambda x\in Z(\mathfrak{D})$. So there exists  $\tau\colon
  \mathfrak{D}\rightarrow Z(\mathfrak{D})$, such that
  $\phi(x)=\lambda x+\tau(x).$
  \end{proof}
\end{theorem}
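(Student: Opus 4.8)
The plan is to transfer the question to the biderivations already classified in Theorem \ref{4.6}, using the standard correspondence between commuting maps and skew-symmetric biderivations. The sufficiency direction is immediate: if $\phi(x)=\lambda x+\tau(x)$ with $\tau(x)\in Z(\mathfrak{D})$, then by bilinearity and antisymmetry of the bracket $[\phi(x),x]=\lambda[x,x]+[\tau(x),x]=0$, since $[x,x]=0$ and $\tau(x)$ is central; hence $\phi$ is commuting.

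For the necessity direction I would start from a commuting $\phi$ and polarize the identity $[\phi(z),z]=0$, which yields $[\phi(x),y]=[x,\phi(y)]$ for all $x,y$. Setting $f(x,y):=[\phi(x),y]$, the Jacobi identity together with the two equivalent expressions $[\phi(x),y]$ and $[x,\phi(y)]$ shows that $f$ is a derivation in each of its slots, so $f\in{\rm B}(\mathfrak{D})$. Theorem \ref{4.6} then provides $\lambda\in\mathbb{C}$ and a finitely supported $\Omega=(\mu_k)_{k\in\mathbb{Z}}$ with $f(x,y)=\lambda[x,y]+\Upsilon_{\Omega}(x,y)$.

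The decisive observation --- and the only genuinely new point, since the structural work is carried by Theorem \ref{4.6} --- is that $\Upsilon_{\Omega}$ is a \emph{symmetric} bilinear form, whereas $f$ is \emph{skew}-symmetric. Symmetry of $\Upsilon_{\Omega}$ is read off its values: the only nonzero entries $\Upsilon_{\Omega}(d_m,d_n)=\sum_{k\in\mathbb{Z}}(k+\frac{1}{2})\mu_k h_{m+n+k+\frac{1}{2}}$ depend on $(m,n)$ only through $m+n$ and so are invariant under interchanging $m$ and $n$, while $\Upsilon_{\Omega}$ vanishes on every mixed pair and on every pair of $h$'s. Skew-symmetry of $f$ comes from $f(x,x)=[\phi(x),x]=0$. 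Writing $f(x,y)=-f(y,x)$ and cancelling the skew term $\lambda[x,y]$ forces $\Upsilon_{\Omega}(x,y)=-\Upsilon_{\Omega}(y,x)$; combined with symmetry this gives $\Upsilon_{\Omega}\equiv 0$. Hence $[\phi(x),y]=\lambda[x,y]=[\lambda x,y]$ for all $y$, so $\phi(x)-\lambda x$ commutes with everything, i.e. $\phi(x)-\lambda x\in Z(\mathfrak{D})$. Defining $\tau(x):=\phi(x)-\lambda x$ gives a linear map $\tau\colon\mathfrak{D}\to Z(\mathfrak{D})$ with $\phi(x)=\lambda x+\tau(x)$, completing the argument.
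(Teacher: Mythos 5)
Your proposal is correct and follows essentially the same route as the paper: sufficiency via centrality of $\tau(x)$, and necessity by passing to the biderivation $f(x,y)=[\phi(x),y]$, invoking Theorem \ref{4.6}, and using $f(x,x)=0$ to kill $\Upsilon_{\Omega}$. In fact you make explicit a step the paper leaves implicit --- that $\Upsilon_{\Omega}$ is symmetric (its nonzero values depend only on $m+n$), so skew-symmetry forces $\Upsilon_{\Omega}\equiv 0$ --- which is a welcome clarification rather than a deviation.
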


\begin{definition}
Let $(\mathfrak L, [\cdot,\cdot])$ be a  Lie algebra.
A commutative post-Lie algebra structure on $\mathfrak L$ is a
bilinear product $x\cdot y$ on $\mathfrak L$ satisfying the following identities:
\begin{longtable}{rcl}
  $x\cdot y$&$=$&$y\cdot x,$\\
  $[x,y]\cdot z$&$=$&$x\cdot (y\cdot z)-y\cdot (x\cdot z),$\\
  $x\cdot[y,z]$& $=$&$[x\cdot y, z]+[y, x\cdot z].$
\end{longtable}
\noindent  A post-Lie algebra $(\mathfrak L, [\cdot ,\cdot], \cdot)$ is said to be trivial if $x\cdot y=0$.
  If a bilinear map $f\colon \mathfrak L\times \mathfrak L\rightarrow \mathfrak  L$ is defined by $f(x, y)= x\cdot y$, then $f$ is a biderivation of $\mathfrak  L$.
\end{definition}

\begin{theorem}
  Any commutative post-Lie algebra structure on  $\mathfrak{D}$ is trivial.
  \begin{proof}
    Suppose that $(\mathfrak{D}, [\cdot,\cdot], \cdot)$ is a commutative post-Lie algebra. By Lemma \ref{4.6}, there exists $\lambda \in \mathbb{C}$, such that
    \[
    f(x, y)=x\cdot y=\lambda[x, y]+\Upsilon_{\Omega}(x,y).
    \]

   \noindent Since $x\cdot y=y\cdot x$, then
    \begin{center}$\lambda[x, y]+\Upsilon_{\Omega}(x,y)\ =\ \lambda[y,x]+\Upsilon_{\Omega}(y,x) $ \ and \
    $2\lambda[x, y]=0.$
    Therefore, $\lambda=0$.
    \end{center}

\noindent    If there is $\mu_{k}\in \Omega$ that makes $\mu_{k}\neq0$, then for $d_{1}, d_{2}, d_{3}$,
    \begin{longtable}{lcl}
      $[d_{2},d_{1}]\cdot d_{3}$&$= $&$d_{2}\cdot(d_{1}\cdot d_{3})-d_{1}\cdot(d_{2}\cdot d_{3})$\\

                              &$=$&$d_{2}\cdot(\sum_{k\in\mathbb{Z}}(k+\frac{1}{2})\mu_{k}h_{k+\frac{9}{2}})
                              -d_{1}\cdot(\sum_{k\in\mathbb{Z}}(k+\frac{1}{2})\mu_{k}h_{k+\frac{11}{2}})=0,$ \\
    $ [d_2, d_1]\cdot d_3 $&$=$&$d_3\cdot d_3=\sum_{k\in\mathbb{Z}}(k+\frac12)\mu_kh_{k+\frac{13}{2}}.$
    \end{longtable}
  \noindent Comparing the obtained results, we have  $\Upsilon_\Omega=0.$ Hence, $f(x, y)= x\cdot y=0$.    Therefore, the conclusion holds.
  \end{proof}
\end{theorem}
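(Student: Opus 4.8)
The plan is to exploit that the product $f(x,y)=x\cdot y$ is itself a biderivation of $\mathfrak{D}$, so Theorem \ref{4.6} applies verbatim and yields $x\cdot y=\lambda[x,y]+\Upsilon_{\Omega}(x,y)$ for some $\lambda\in\mathbb{C}$ and some finitely supported family $\Omega=(\mu_{k})_{k\in\mathbb{Z}}$. The two remaining post-Lie axioms are then used to force $\lambda=0$ and $\Upsilon_{\Omega}=0$ in turn, after which $x\cdot y=0$ follows.

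For the first reduction I would invoke commutativity $x\cdot y=y\cdot x$. The bracket $[\cdot,\cdot]$ is antisymmetric, whereas $\Upsilon_{\Omega}(d_{m},d_{n})=\sum_{k}(k+\frac{1}{2})\mu_{k}h_{m+n+k+\frac{1}{2}}$ depends only on the sum $m+n$ and vanishes on every mixed pair and every $h$-$h$ pair; hence $\Upsilon_{\Omega}$ is symmetric in its arguments. Subtracting $x\cdot y=y\cdot x$ cancels the symmetric part and leaves $2\lambda[x,y]=0$ for all $x,y$. Since $\mathfrak{D}$ is not abelian, this forces $\lambda=0$, so that $x\cdot y=\Upsilon_{\Omega}(x,y)$.

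For the second reduction I would test the identity $[x,y]\cdot z=x\cdot(y\cdot z)-y\cdot(x\cdot z)$ on the Virasoro triple $x=d_{2}$, $y=d_{1}$, $z=d_{3}$. On the left, $[d_{2},d_{1}]=d_{3}$, so the left side equals $d_{3}\cdot d_{3}=\Upsilon_{\Omega}(d_{3},d_{3})=\sum_{k\in\mathbb{Z}}(k+\frac{1}{2})\mu_{k}h_{k+\frac{13}{2}}$. On the right, the inner products $d_{1}\cdot d_{3}$ and $d_{2}\cdot d_{3}$ are combinations of $h$-vectors; the crucial point is that $\Upsilon_{\Omega}$ annihilates every pair $(d_{m},h_{n+\frac{1}{2}})$, so the outer products $d_{2}\cdot(d_{1}\cdot d_{3})$ and $d_{1}\cdot(d_{2}\cdot d_{3})$ both vanish and the right side is $0$. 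Comparing coefficients of the linearly independent vectors $h_{k+\frac{13}{2}}$ gives $(k+\frac{1}{2})\mu_{k}=0$ for every $k\in\mathbb{Z}$; since $k+\frac{1}{2}\neq0$ for integer $k$, this yields $\mu_{k}=0$ for all $k$, i.e.\ $\Upsilon_{\Omega}=0$.

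The genuine content lies in this second step, and the main obstacle is recognizing that the vanishing of $\Upsilon_{\Omega}$ on $(d,h)$-pairs makes the nested products on the right-hand side collapse, so that the left-module identity reduces to the single nontrivial equation $d_{3}\cdot d_{3}=0$. One should also verify that the chosen triple is rich enough to detect every index $k$; this works because $\Upsilon_{\Omega}(d_{3},d_{3})$ already involves the full family $(\mu_{k})_{k}$, so one well-chosen relation suffices to kill all coefficients at once. Combining the two reductions gives $x\cdot y=0$, so the commutative post-Lie structure is trivial.
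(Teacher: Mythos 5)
Your proposal is correct and follows essentially the same route as the paper: apply Theorem \ref{4.6} to $f(x,y)=x\cdot y$, use commutativity together with the symmetry of $\Upsilon_{\Omega}$ to kill $\lambda$, and then test the identity $[x,y]\cdot z=x\cdot(y\cdot z)-y\cdot(x\cdot z)$ on exactly the triple $(d_2,d_1,d_3)$, where the vanishing of $\Upsilon_{\Omega}$ on $(d,h)$-pairs collapses the right-hand side and forces all $\mu_k=0$. The only cosmetic difference is that the paper phrases the second step as a contradiction argument, while you argue directly; the computations coincide.
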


\section{The left-symmetric algebra structures on  {\rm mHV} algebra }

\begin{definition}\label{5.1}
 An algebra  ${\rm A}$ is called   a left-symmetric algebra if it satisfies
 \[(xy)z-x(yz)=(yx)z-y(xz).\]
 \end{definition}
 It is known that  the commutator multiplication $[x, y]=xy-yx,$
 defined on a left-symmetric algebra $\rm{A},$
    gives a Lie algebra $\mathcal{L}(\rm{A})$, which is called the sub-adjacent of $\rm{A}$ and $\rm{A}$ is called a compatible left-symmetric algebra structure on the Lie algebra $\mathcal{L}(\rm{A})$.
   A compatible left-symmetric algebra structure on $\mathfrak{L}$ will be denoted by $\mathcal{A}(\mathfrak{L}).$
A compatible left-symmetric algebra structure on the Virasoro algebra $\mathcal{V}=\langle d_n,  {\bf c} \ |\  n \in \mathbb Z\rangle$
is said to
have the natural grading condition if   nonzero multiplications of basis elements of $\mathcal{A}(\mathcal{V})$ satisfies
\begin{eqnarray}
d_m d_n=f(m, n)d_{m+n}+\omega(m, n){\bf c} \label{1}.
\end{eqnarray}
\begin{lemma}\label{t5.2}(see, \cite{WV})
 Any compatible  left-symmetric algebra structure on the Virasoro algebra $\mathcal{V}$ satisfying (\ref{1}) is given by the multiplication table
  \[
  d_m d_n=-\frac{n(1+\epsilon n)}{1+\epsilon(m+n)}d_{m+n}+\frac{1}{24}(m^3-m+(\epsilon-\epsilon^{-1})m^2)\delta_{m+n,0}{\bf c},
  \]
  where $\epsilon \neq 0$ and $\epsilon^{-1} \notin \mathbb Z.$
\end{lemma}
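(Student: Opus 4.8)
The plan is to convert the two defining requirements on $\mathcal{A}(\mathcal{V})$---that the commutator $xy-yx$ reproduce the Virasoro bracket, and that the product be left-symmetric in the sense of Definition \ref{5.1}---into functional equations for $f$ and $\omega$, and then to solve them. Comparing the $d_{m+n}$- and ${\bf c}$-coefficients in $d_md_n-d_nd_m$ yields the skew-relations
\[ f(m,n)-f(n,m)=m-n, \qquad \omega(m,n)-\omega(n,m)=\tfrac{m^{3}-m}{12}\,\delta_{m+n,0}. \]
The grading condition (\ref{1}) together with $\deg{\bf c}=0$ forces $\omega(m,n)=0$ unless $m+n=0$; set $g(m):=\omega(m,-m)$. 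By the same condition the only nonzero products of basis elements are the $d_md_n$, so ${\bf c}$ multiplies trivially, and feeding the triple $(d_l,d_m,d_n)$ into the left-symmetric identity splits into a $d_{l+m+n}$-part
\[ (l-m)f(l+m,n)=f(m,n)f(l,m+n)-f(l,n)f(m,l+n), \]
and, on triples with $l+m+n=0$, a ${\bf c}$-part
\[ (l-m)g(l+m)=f(m,-l-m)\,g(l)-f(l,-l-m)\,g(m). \]

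The heart of the argument is the first equation. Setting $m=0$ and cancelling the (generically nonzero) factor $f(l,n)$ gives $f(0,n)=c_{0}-n$ with $c_{0}:=f(0,0)$, and the skew-relation then gives $f(n,0)=c_{0}$. I would next show that every solution carrying this normalization has the form $f(m,n)=\dfrac{(c_{0}-n)B(n)}{B(m+n)}$ for some function $B$ (existence of $B$ by induction on $|m|$ via the $l=1$ recurrence; that this ansatz solves the first equation identically is an immediate check). The skew-relation then collapses to the single scalar equation
\[ (c_{0}-n)B(n)-(c_{0}-m)B(m)=(m-n)B(m+n), \]
whose $m=0$ instance reads $c_{0}\bigl(B(n)-B(0)\bigr)=0$. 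This branches into $c_{0}\neq0$, forcing $B$ constant and $f(m,n)=c_{0}-n$, and $c_{0}=0$, for which the equation becomes $mB(m)-nB(n)=(m-n)B(m+n)$; a short computation with the specializations $n=1$ and $n=-m$ forces $B$ affine. Normalizing $B(0)=1$ yields $B(n)=1+\epsilon n$ and $f(m,n)=-\dfrac{n(1+\epsilon n)}{1+\epsilon(m+n)}$, and well-definedness over $\mathbb{Z}$ demands $1+\epsilon k\neq0$ for all $k\in\mathbb{Z}$, i.e. $\epsilon^{-1}\notin\mathbb{Z}$.

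The step I expect to be the real obstacle is showing that the center eliminates the stray solutions and pins down $g$. Here the cubic $\tfrac{m^{3}-m}{12}$ in the skew-relation for $\omega$ is the decisive lever: substituting each candidate $f$ into the ${\bf c}$-equation and specializing at $m=0$ shows that the stray solution $f(m,n)=c_{0}-n$ with $c_{0}\neq0$ forces $g$ to be \emph{linear}, while the value $\epsilon=0$ forces $\deg g\leq2$; neither can satisfy $g(m)-g(-m)=\tfrac{m^{3}-m}{12}$. Thus compatibility with a genuine central extension is exactly what excludes those cases and leaves $\epsilon\neq0$. For the surviving family I would then solve the ${\bf c}$-equation together with the cubic skew-relation: the odd part of $g$ is fixed to $\tfrac{1}{24}(m^{3}-m)$, and matching the remaining even degree-two freedom against the equation pins its coefficient to $\tfrac{1}{24}(\epsilon-\epsilon^{-1})m^{2}$, giving $g(m)=\tfrac{1}{24}\bigl(m^{3}-m+(\epsilon-\epsilon^{-1})m^{2}\bigr)$ and hence the stated $\omega$. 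The delicate points throughout are justifying the cancellations on the loci where $f$ or $c_{0}-n$ vanishes, establishing completeness of the $B$-ansatz, and the bookkeeping that reduces all the apparent freedom to the single invariant $\epsilon$.
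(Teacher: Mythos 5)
First, a point of order: the paper does not prove this lemma at all --- it is imported verbatim from Kong--Chen--Bai \cite{WV} --- so your proposal is in effect an attempt to reconstruct that classification from scratch. Your reduction to the two functional equations is correct, and so is your endgame with the center: writing $\omega(m,n)=g(m)\delta_{m+n,0}$ and substituting $g(l)=(1-\epsilon l)G(l)$ into the ${\bf c}$-component of left-symmetry turns it into $(l-m)G(l+m)=(l+m)\bigl(G(l)-G(m)\bigr)$, whose solutions are $G(l)=\alpha l^{2}+\beta l$; the condition $g(m)-g(-m)=\tfrac{1}{12}(m^{3}-m)$ then gives $\alpha=-\tfrac{1}{24}\epsilon^{-1}$, $\beta=-\tfrac{1}{24}$, hence exactly the stated $\omega$, and the same computation kills the stray families $f(m,n)=c_{0}-n$ with $c_{0}\neq0$ (which forces $g$ affine) and $\epsilon=0$ (which forces $g$ quadratic).

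The genuine gap is in the centerless half, precisely at the two claims you flag as ``delicate'' but never establish: (i) the uniform normalization $f(0,n)=c_{0}-n$, and (ii) completeness of the ansatz $f(m,n)=(c_{0}-n)B(n)/B(m+n)$. These two claims \emph{are} the Kong--Chen--Bai theorem; everything around them is routine. Concretely, the $m=0$ specialization only yields $f(l,n)\bigl[l-f(0,n)+f(0,l+n)\bigr]=0$, and combining it with skewness at $n=0$ gives a \emph{pointwise} dichotomy $f(l,0)\in\{0,c_{0}\}$ for each $l$, not a uniform choice of branch. Mixed configurations pass all the cheap constraints: the $n=0$ specialization rewrites as $\bigl[f(m,0)-f(l,0)\bigr]f(m,l)=(l-m)\bigl[f(l+m,0)-f(m,0)\bigr]$, which only demands that the set $\{l: f(l,0)=c_{0}\}$ and its complement each be closed under sums of distinct elements --- and $f(l,0)=c_{0}$ for $l\ge 0$, $f(l,0)=0$ for $l<0$ satisfies this, together with a consistent sign pattern of forced zeros of $f$. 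Killing such a configuration requires chasing the left-symmetric identity across the two regions: the triples $(l,m,n)=(2,-2,1),(4,-2,1),(4,-4,1)$ force $c_{0}=1$, after which $(1,-1,1)$ gives $f(1,1)=\tfrac{2}{3}$ while $(3,-3,1),(3,-2,1)$ give $f(1,1)=0$. That kind of multi-step case analysis, carried out for arbitrary mixed configurations, is the actual content of the proof, and nothing in your outline produces it. Likewise your induction ``via the $l=1$ recurrence'' for claim (ii) cannot run as described: the recurrence $(1-m)f(m+1,n)=f(m,n)f(1,m+n)-f(1,n)f(m,1+n)$ has vanishing left-hand coefficient at $m=1$, so it never determines $f(2,n)$, and constructing $B$ requires dividing by values of $f$ that really do vanish even in the target solution (there $f(m,0)=0$ for all $m$). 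So the skeleton, the role of the center, and the final formulas are all right, but the core classification step is asserted rather than proved.
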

 The mirror Heisenberg-Virasoro algebra $\mathfrak{D}$ contains twisted Heisenberg subalgebra and Virasoro subalgebra and is also $\mathbb{Z}$-graded.
   Let us remember the grading components of $\mathfrak{D}:$
\begin{longtable}{rclrclrcl}
$\mathfrak{D}_{0}$&$=$&$\langle d_{0},h_{\frac{1}{2}},{\bf c}\rangle,$&
$\mathfrak{D}_{-1}$&$=$&$\langle d_{-1},h_{-\frac{1}{2}}, {\bf l}\rangle,$&
$\mathfrak{D}_{n}$&$=$&$\langle d_{n},h_{n+\frac{1}{2}}\ | \ n\in \mathbb{Z}\backslash\{0,-1\}\rangle.$
\end{longtable}
 Naturally, we have the natural grading condition
(in agreement with the natural grading condition of the multiplication of $\mathcal{A}(\mathcal{V})$)
if   nonzero multiplications of basis elements of  $\mathcal{A}(\mathfrak{D})$ satisfies the following relations, which we will denote by ($\diamond$):

 \begin{longtable}{rcl}
   $d_m d_n$&$=$&$f(m, n)d_{m+n}+\omega(m, n){\bf c},$ \\
   $d_m h_{n+\frac{1}{2}}$&$=$&$g(m, n)h_{m+n+\frac12},$\\
   $h_{m+\frac{1}{2}} d_n $&$=$&$h(m, n)h_{m+n+\frac12},$\\
   $h_{m+\frac12}h_{n+\frac12}$&$=$&$a(m, n)d_{m+n}+b(m, n)h_{m+n+\frac12}+\rho(m, n){\bf l}.$\\
 \end{longtable}
 The main aim of the present section is to prove the following theorem.
 \begin{theorem}\label{T5.6}
 Any compatible left-symmetric algebra structure on the mirror Heisenberg-Virasoro algebra satisfying relation $(\diamond)$ is given by  the following multiplication table:
\begin{longtable}{rcl}
   $d_m d_n$&$=$&$-\frac{n(1+\epsilon n)}{1+\epsilon(m+n)} d_{m+n}+\frac{1}{24}(m^3-m+(\epsilon-\epsilon^{-1})m^2)\delta_{m+n,0}{\bf c},$\\
   $d_m h_{n+\frac{1}{2}}$&$=$&$ -(n+\frac12) h_{m+n+\frac12},$\\
   $h_{m+\frac12}h_{n+\frac12}$&$=$&$\frac12(n+\frac12)\delta_{n+m+1,0}{\bf l},$\\
   \end{longtable}
   where $\epsilon \neq 0$ and $\epsilon^{-1} \notin \mathbb Z.$
\end{theorem}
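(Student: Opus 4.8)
The plan is to read off the coefficient functions $f,\omega,g,h,a,b,\rho$ of $(\diamond)$ one block at a time, using that the identity of Definition \ref{5.1} is equivalent to the associator $(x,y,z):=(xy)z-x(yz)$ being symmetric in its first two entries, together with the requirement that $xy-yx$ reproduce the bracket of $\mathfrak D$. First note that no product having $\mathbf c$ or $\mathbf l$ as a factor occurs among the nonzero products in $(\diamond)$, so both central elements multiply as zero on either side; consequently $\mathcal V=\langle d_n,\mathbf c\rangle$ is a subalgebra of $\mathcal A(\mathfrak D)$ whose induced structure has exactly the shape (\ref{1}). Lemma \ref{t5.2} then determines the first line of the table, namely $f(m,n)=-\tfrac{n(1+\epsilon n)}{1+\epsilon(m+n)}$ and $\omega(m,n)=\tfrac1{24}\big(m^3-m+(\epsilon-\epsilon^{-1})m^2\big)\delta_{m+n,0}$.

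Next I would impose $(x,y,z)=(y,x,z)$ on the triple $(d_l,d_m,h_{n+\frac12})$. Since $f(l,m)-f(m,l)=l-m$, the central contributions cancel and one is left with
\begin{equation*}
(l-m)\,g(l+m,n)=g(m,n)\,g(l,m+n)-g(l,n)\,g(m,l+n).\tag{$\ast$}
\end{equation*}
This is precisely the condition that left multiplication by the $d$'s make $H=\langle h_{n+\frac12}\rangle$ a $\mathbb Z$-graded module with one-dimensional homogeneous components over the Witt algebra $\langle d_m\rangle$ — indeed, in any left-symmetric algebra the left multiplications satisfy $L_{[x,y]}=[L_x,L_y]$, so $L$ represents the sub-adjacent Lie algebra. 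Solving $(\ast)$, e.g. by setting $l=0$ to obtain a recursion for $g(0,\cdot)$ and then propagating in the first index, yields the intermediate-series family $g(m,n)=\alpha m-n+\gamma$ as the generic solution.

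The two parameters $\alpha,\gamma$ are eliminated using compatibility and one more triple. Matching $d_mh_{n+\frac12}-h_{n+\frac12}d_m$ with $-(n+\tfrac12)h_{m+n+\frac12}$ gives $h(a,b)=g(b,a)+(a+\tfrac12)=\alpha b+(\gamma+\tfrac12)$, so $h$ is expressed through $g$. Substituting $g,h$ into $(x,y,z)=(y,x,z)$ on $(h_{l+\frac12},d_m,d_n)$, inserting the explicit $f$ and clearing the denominator $1+\epsilon(l+n)$, turns the identity into a polynomial relation in $l$; its top-degree coefficient forces $\alpha=0$ and the remaining coefficient forces $\gamma=-\tfrac12$, whence $g(m,n)=-(n+\tfrac12)$ and $h\equiv0$, the second line of the table. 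For the last block, I would evaluate $(x,y,z)=(y,x,z)$ on $(d_l,h_{m+\frac12},h_{n+\frac12})$ and separate its $d$-, $h$-, $\mathbf c$- and $\mathbf l$-components; the choice $l=0$ collapses the first two to $a\equiv0$ and $b\equiv0$ and forces $\rho(m,n)=0$ unless $m+n+1=0$ (the analogue of the $\delta$-support of $\omega$). Setting $c_m:=\rho(m,-m-1)$, the $\mathbf l$-component reduces to the proportionality $(m+\tfrac12)c_{l+m}=(l+m+\tfrac12)c_m$, so $c_m=C(m+\tfrac12)$, and the antisymmetric compatibility $\rho(m,n)-\rho(n,m)=(m+\tfrac12)\delta_{m+n+1,0}$ fixes $C$, producing the third line.

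The main obstacle is the analysis of $(\ast)$ and the subsequent removal of its free parameters. Equation $(\ast)$ by itself admits the entire two-parameter family $\alpha m-n+\gamma$, and a fully rigorous treatment must also dispose of the degenerate branches in which $g$ vanishes on part of the grading (the exceptional Kaplansky--Santharoubane modules); the real content is then that the mixed identity on $(h_{l+\frac12},d_m,d_n)$, driven by the rigid form of $f$ from Lemma \ref{t5.2}, is sharp enough to force $\alpha=0$ and $\gamma=-\tfrac12$. By contrast, sufficiency — that the displayed multiplication satisfies Definition \ref{5.1} and recovers the bracket of $\mathfrak D$ — is a routine verification on basis triples.
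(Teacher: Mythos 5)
Your first and third blocks, and the generic part of your middle block, are sound and in fact run parallel to the paper's proof of Theorem \ref{t5.4}: the reduction of $d_md_n$ to Lemma \ref{t5.2}, the determination of $a=b=0$ from the $(d_0,h,h)$ associator (the paper's eighth and ninth relations of $(\star)$ with $n=0$), and the determination of $\rho$ from its support condition, a proportionality, and the antisymmetry $\rho(m,n)-\rho(n,m)=(m+\frac12)\delta_{m+n+1,0}$ are all essentially the paper's own steps. Your generic computation is also correct: with $g(m,n)=\alpha m-n+\gamma$ and $h(n,m)=g(m,n)+(n+\frac12)=\alpha m+\gamma+\frac12$, substituting into $h(m,k)g(n,m+k)-f(n,k)h(m,n+k)=-(m+\frac12)h(m+n,k)$ and clearing the denominator $1+\epsilon(n+k)$, the coefficient of $n^2$ is $\alpha\epsilon\bigl(\alpha k+\gamma+\frac12\bigr)$, which forces $\alpha=0$ since $\epsilon\neq0$, and the coefficient of $n$ then forces $\gamma=-\frac12$; so in the intermediate-series branch your argument does yield $g(m,n)=-(n+\frac12)$, $h=0$.

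The genuine gap is the step you flag and then defer: eliminating the non-generic solutions of your cocycle equation for $g$. That equation is exactly the condition that $H$ be a graded Witt module with one-dimensional homogeneous components, and its solution set is strictly larger than the two-parameter family $\alpha m-n+\gamma$: it contains the exceptional Kaplansky--Santharoubane modules (deformed at a single component), the zero action, and decomposable or partially vanishing actions, none of which is excluded by the compatibility $g(m,n)-h(n,m)=-(n+\frac12)$ alone. Ruling these out is not a residual technicality --- it is the bulk of the paper's proof, namely the two internal lemmas $A=\varnothing$ and $B=\mathbb Z$ inside Theorem \ref{t5.4}. Moreover, the paper does not accomplish this with the $(h,d,d)$-identity alone: both lemmas import the $\mathbf{l}$-component relations of the paper's list $(\ast)$, writing $\rho(n,k)=\theta(n)\delta_{n+k+1,0}$ and playing $\theta$ against the degenerate values of $g$ to reach contradictions such as $m_i=\frac12\notin\mathbb Z$ and the incompatibility surrounding equation (\ref{24}). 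So your closing assertion that the mixed identity ``is sharp enough'' to kill the degenerate branches is precisely the unproven claim; as written, the proposal omits the hardest part of the proof, and it is not even clear that your chosen tools (cocycle, compatibility, and the $(h,d,d)$ triple) suffice for it without bringing in the $\rho$-relations as the paper does.
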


The algebra, obtained in the previous theorem will be called
the graded mirror Heisenberg-Virasoro left-symmetric algebra $\mathcal{A}$.

\subsection{Left-symmetric algebra structures on {\rm mHV} algebra.}
 \begin{lemma}
  A bilinear product induced from   $(\diamond)$  gives a compatible left-symmetric algebra structure on $\widetilde{\mathfrak{D}}$ if and only if
   \begin{longtable}{rcl}
     $f(m, n)-f(n, m)$&$=$&$m-n,$\\
     $g(m, n)-h(n, m)$&$=$&$-(n+\frac12),$\\
     $a(m, n)$&$=$&$a(n, m),$ \\
     $b(m, n)$&$=$&$b(n, m),$\\
     $f(m, k)f(n, m+k)-f(n, k)f(m, n+k)$&$=$&$(n-m)f(m+n, k),$\\
     $g(m, k)g(n, m+k)-g(n, k)g(m, n+k)$&$=$&$(n-m)g(m+n, k),$\\
     $h(m, k)g(n, m+k)-f(n, k)h(m, n+k)$&$=$&$-(m+\frac12)h(m+n, k),$\\
     $a(m, k)f(n, m+k)-g(n, k)a(m, n+k)$&$=$&$-(m+\frac12)a(m+n, k),$ \\
     $b(m, k)g(n, m+k)-g(n, k)b(m, n+k)$&$=$&$-(m+\frac12)b(m+n, k),$\\
     $h(m, k)a(a, m+k)-h(n, k)a(m, n+k)$&$=$&$0,$\\
     $h(m, k)b(n, m+k)-h(n, k)b(m, n+k)$&$=$&$0,$\\
     $b(m, k)a(n, m+k)-b(m, k)a(m, n+k)$&$=$&$0,$\\
     $a(m, k)h(n, m+k)+b(m, k)b(n, m+k)$&$=$&$a(n, k)h(m, n+k)+b(n, k)b(m, n+k).$
   \end{longtable}
The present relations will be denoted by $(\star)$.
   \begin{proof}
     \begin{itemize}
   \item[(i)] On the one hand
    $[d_{m}, d_{n}]=(m-n)d_{m+n},$
    and on the other hand
    \begin{center}
      $[d_{m}, d_{n}] \ =\ d_md_n-d_nd_m\ = \ f(m, n)d_{m+n}-f(n, m)d_{m+n}.$
    \end{center}
    Comparing the two equations above,
    $f(m, n)-f(n, m)=m-n.$

    \item[(ii)] On the one hand
    $[d_m, h_{n+\frac12}]=-(n+\frac12)h_{m+n+\frac12},$
     and on the other hand
    \[ [d_m, h_{n+\frac12}]=g(m, n)h_{m+n+\frac12}-h(n, m)h_{m+n+\frac12}.\]
     Comparing the two equations above,
    $g(m, n)-h(n, m)=-(n+\frac12).$

    \item[(iii)] On the one hand
    $[h_{m+\frac12}, h_{n+\frac12}]=0,$ and
 on the other hand
    \[ [h_{m+\frac12}, h_{n+\frac12}]=a(m, n)d_{m+n}+b(m, n)h_{m+n+\frac12}-a(n, m)d_{m+n}-b(n, m)h_{m+n+\frac12}.
    \]
     Comparing the two equations above,
    $a(m, n)=a(n, m)$ and $b(m, n)=b(n, m).$

   \item[(iv)] By the left-symmetric identity $(d_n, d_m, d_k)=(d_m, d_n, d_k),$ we have
   \begin{flushleft}   $f(n, m)f(n+m, k)d_{n+m+k}- f(m, k)f(n, m+k)d_{m+n+k}  =$\end{flushleft}
      \begin{flushright}$= f(m, n)f(n+m, k)d_{n+m+k}- f(n, k)f(m, n+k)d_{m+n+k}.$
    \end{flushright}
    Hence, $f(m, k)f(n, m+k)-f(n, k)f(m, n+k)=(n-m)f(m+n, k).$
  \end{itemize}
    \item[(v)]
    By the left-symmetric identity $(d_n, d_m, h_{k+\frac12})=(d_m, d_n, h_{k+\frac12})$, we have
   \begin{flushleft}
  $f(n, m)g(n+m, k)h_{m+n+k+\frac12}-g(m, k)g(n, m+k)h_{m+n+k+\frac12}
   =$\end{flushleft}
   \begin{flushright}$= (m, n)g(n+m, k)h_{m+n+k+\frac12}-g(n, k)g(m, n+k)h_{m+n+k+\frac12}.$
  \end{flushright}
  Hence, $ g(m, k)g(n, m+k)-g(n, k)g(m, n+k)=(n-m)g(m+n, k).$

    \item[(vi)]  By the left-symmetric identity $(d_n, h_{m+\frac12}, d_k)=(h_{m+\frac12}, d_n, d_k)$, we have
   \begin{flushleft}$g(n, m)h(m+n, k)h_{m+n+k+\frac12}-h(m, k)g(n, m+k)h_{m+n+k+\frac12}=$\end{flushleft}
  \begin{flushright}   $=h(m, n)h(m+n, k)h_{m+n+k+\frac12}-f(n, k)h(m, n+k)h_{m+n+k+\frac12}.$\end{flushright}
Hence, $ h(m, k)g(n, m+k)-f(n, k)h(m, n+k)=-(m+\frac12)h(m+n, k).$

    \item[(vii)] By the left-symmetric identity $(d_n, h_{m+\frac12}, h_{k+\frac12})=(h_{m+\frac12}, d_n, h_{k+\frac12})$, we have
  \begin{flushleft}$g(n, m)\big(a(n+m, k)d_{m+n+k}+b(n+m, k)h_{m+n+k+\frac12}\big)$\end{flushleft}
 \begin{flushright}     $-a(m, k)f(n, m+k)d_{m+n+k}-b(m, k)g(n, m+k)h_{m+n+k+\frac12}=$\end{flushright}

   \begin{flushleft}$=h(m, n)\big(a(n+m, k)d_{m+n+k}+b(n+m, k)h_{m+n+k+\frac12}\big)$\end{flushleft}
  \begin{flushright}    $-g(n, k)\big(a(m, n+k)d_{m+n+k}+b(m, n+k)h_{m+n+k+\frac12}\big).$\end{flushright}

 Hence,
 \begin{longtable}{rcl}
     $a(m, k)f(n, m+k)-g(n, k)a(m, n+k)$&$=$&$-(m+\frac12)a(m+n, k),$\\
     $b(m, k)g(n, m+k)-g(n, k)b(m, n+k)$&$=$&$-(m+\frac12)b(m+n, k).$
\end{longtable}

    \item[(viii)]
    By the left-symmetric identity $(h_{n+\frac12}, h_{m+\frac12}, d_{k})=(h_{m+\frac12}, h_{n+\frac12}, d_k)$, we have
   \begin{flushleft}$h(m, k)\big(a(n, m+k)d_{m+n+k}+b(n, m+k)h_{m+n+k+\frac12}\big)=$\end{flushleft}
       \begin{flushright}    $h(n, k)\big(a(m, n+k)d_{m+n+k}+b(m, n+k)h_{m+n+k+\frac12}\big).$\end{flushright}

Hence,
     $h(m, k)a(a, m+k)=h(n, k)a(m, n+k)$ and
     $h(m, k)b(n, m+k)=h(n, k)b(m, n+k).$

   \item[(ix)]   By the left-symmetric identity $(h_{n+\frac12}, h_{m+\frac12}, h_{k+\frac12})=(h_{m+\frac12}, h_{n+\frac12}, h_{k+\frac12})$, we have
        \begin{flushleft}$a(m, k)h(n, m+k)h_{m+n+k+\frac12}+b(m, k)\big(a(n, m+k)d_{m+n+k}+b(n, m+k)h_{m+n+k+\frac12}\big)=$\end{flushleft}
           \begin{flushright}    $a(n, k)h(m, n+k)h_{m+n+k+\frac12}+b(n, k)\big(a(m, n+k)d_{m+n+k}+b(m, n+k)h_{m+n+k+\frac12}\big).$\end{flushright}
Hence, \begin{longtable}{lcl}
         $b(m, k)a(n, m+k)-b(m, k)a(m, n+k)$&$=$&$0,$\\
         $a(m, k)h(n, m+k)+b(m, k)b(n, m+k)$&$=$&$a(n, k)h(m, n+k)+b(n, k)b(m, n+k).$
       \end{longtable}

   Therefore, the conclusion holds.
   \end{proof}
 \end{lemma}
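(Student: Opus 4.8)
The statement is an equivalence: a bilinear product defined via the grading template $(\diamond)$ gives a compatible left-symmetric algebra structure on $\widetilde{\mathfrak{D}}$ precisely when the coefficient functions $f,g,h,a,b$ satisfy the list of relations $(\star)$. My plan is to prove both directions simultaneously by a direct translation argument, since ``being a compatible left-symmetric structure'' is by Definition~\ref{5.1} the conjunction of two kinds of conditions that I can test on basis elements one at a time. The first requirement is that the commutator $xy-yx$ of the new product reproduces the given Lie bracket on $\widetilde{\mathfrak{D}}$; the second is the left-symmetric identity $(x,y,z)=(y,x,z)$ for the associator $(x,y,z)=(xy)z-x(yz)$. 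Because everything is bilinear and $\widetilde{\mathfrak{D}}$ has the basis $\{d_n,h_{n+\frac12}\}$, each condition is equivalent to its restriction to all triples of basis vectors, and each such restriction is an identity between coefficient functions obtained by expanding via $(\diamond)$ and comparing coefficients of the (linearly independent) basis elements $d_{m+n+k}$ and $h_{m+n+k+\frac12}$.

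Concretely, I would organize the verification into the two groups already signalled in the proof skeleton. First, the commutator conditions: evaluating $[d_m,d_n]$, $[d_m,h_{n+\frac12}]$ and $[h_{m+\frac12},h_{n+\frac12}]$ through $xy-yx$ and matching against the known brackets of $\widetilde{\mathfrak{D}}$ yields exactly the first four relations of $(\star)$ (the symmetry $f(m,n)-f(n,m)=m-n$, the mixed relation $g(m,n)-h(n,m)=-(n+\frac12)$, and the symmetries $a(m,n)=a(n,m)$, $b(m,n)=b(n,m)$); this is parts (i)--(iii). Second, the associativity-type conditions: I run through each admissible type of basis triple---$(d,d,d)$, $(d,d,h)$, $(d,h,d)$, $(d,h,h)$, $(h,h,d)$, and $(h,h,h)$---write out both associators $(x,y,z)$ and $(y,x,z)$ using $(\diamond)$, and equate coefficients. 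Parts (iv)--(ix) then produce the remaining identities of $(\star)$. The crucial point that makes this a genuine iff is that the basis elements $d_{\bullet}$ and $h_{\bullet}$ are linearly independent, so matching coefficients loses no information in either direction: the left-symmetric identity holds on a triple exactly when the corresponding coefficient identities hold.

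The main bookkeeping obstacle, and the only place where care is really needed, is the full enumeration of triple types together with the correct placement of the structure-constant arguments after each application of $(\diamond)$. Since the product is not symmetric, the triples $(d_m,h_{n+\frac12},d_k)$ versus $(h_{m+\frac12},d_n,d_k)$ and the like must be treated as distinct, and one must be vigilant that an inner product such as $h_{m+\frac12}d_k=h(m,k)h_{m+k+\frac12}$ feeds the correct shifted index into the outer factor. By the symmetry of the associator condition $(x,y,z)=(y,x,z)$ in its first two slots, it suffices to check one representative ordering of the first two entries for each unordered pair of types, which halves the work; the remaining orderings follow automatically. Once every triple type is handled and coefficients of $d_{m+n+k}$ and $h_{m+n+k+\frac12}$ are separated, the twelve relations of $(\star)$ emerge, and conversely assuming $(\star)$ the same computations show every associator identity and every commutator identity holds, establishing that $(\diamond)$ defines a compatible left-symmetric structure on $\widetilde{\mathfrak{D}}$.
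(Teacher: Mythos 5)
Your proposal is correct and follows essentially the same route as the paper: check that the commutator of the $(\diamond)$-product reproduces the bracket on pairs of basis elements, then impose the left-symmetric identity $(x,y,z)=(y,x,z)$ on the six triple types (using its symmetry in the first two slots) and compare coefficients of the linearly independent elements $d_{m+n+k}$ and $h_{m+n+k+\frac12}$, which is exactly the paper's parts (i)--(iii) and (iv)--(ix), with the iff following because coefficient comparison is lossless. The only quibble is cosmetic: $(\star)$ as stated consists of thirteen relations, not twelve, but this miscount has no bearing on the argument.
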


 \begin{lemma}
  A multiplication defined by relations $(\diamond)$ gives a compatible left-symmetric algebra structure on $\mathfrak{D}$ if and only if $(\star)$ and the following identities hold for all $m, n, k\in\mathbb{Z}$,
  \begin{longtable}{rcl}
    $\omega(m, n)-\omega(n, m)$&$=$&$\frac{m^3-m}{12}\delta_{m+n,0},$\\
    $\rho(m, n)-\rho(n, m)$&$=$&$(m+\frac12)\delta_{m+n+1, 0},$\\
    $f(n, k)\omega(m, {n+k})-f(m, k)\omega(n, m+k)$&$=$&$(m-n)\omega(m+n, k),$ \\
    $g(m, k)\rho(n, m+k)-(n+\frac12)\rho(m+n, k)$&$=$&$0,$\\
    $a(m, k)\omega(n, {m+k})$&$=$&$0,$\\
    $h(n, k)\rho(m, n+k)$&$=$&$h(m, k)\rho(n, m+k),$\\
    $b(m, k)\rho(n, m+k )$&$=$&$b(n, k)\rho( m, n+k).$
  \end{longtable}

The present relations will be denoted by $(\ast)$.
\end{lemma}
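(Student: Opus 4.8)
The plan is to test the two defining requirements of a compatible left-symmetric structure directly on basis elements: compatibility, $xy-yx=[x,y]$ with the bracket of $\mathfrak{D}$, and left-symmetry, which asserts that the associator $(x,y,z):=(xy)z-x(yz)$ is symmetric in its first two slots, $(x,y,z)=(y,x,z)$. Since $\mathbf{c}$ and $\mathbf{l}$ are central and every product in $(\diamond)$ with a central factor vanishes, any associator having a central entry is zero; so it suffices to test the symmetry on triples drawn from $\{d_m,h_{m+\frac12}\}$. Reducing $(\diamond)$ modulo $Z(\mathfrak{D})=\langle\mathbf{c},\mathbf{l}\rangle$ returns exactly the multiplication of $\widetilde{\mathfrak{D}}$ treated in the previous lemma, so the $d$- and $h$-components of all these identities are equivalent to $(\star)$. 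What is left is to record the $\mathbf{c}$- and $\mathbf{l}$-components, and I expect these to produce precisely $(\ast)$.

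First I would dispose of compatibility. Comparing $d_md_n-d_nd_m$ with $[d_m,d_n]=(m-n)d_{m+n}+\frac{m^3-m}{12}\delta_{m+n,0}\mathbf{c}$ gives $f(m,n)-f(n,m)=m-n$ on the $d$-part (already in $(\star)$) and $\omega(m,n)-\omega(n,m)=\frac{m^3-m}{12}\delta_{m+n,0}$ on the $\mathbf{c}$-part; comparing $h_{m+\frac12}h_{n+\frac12}-h_{n+\frac12}h_{m+\frac12}$ with $(m+\frac12)\delta_{m+n+1,0}\mathbf{l}$ gives the symmetry of $a,b$ together with $\rho(m,n)-\rho(n,m)=(m+\frac12)\delta_{m+n+1,0}$, while the mixed bracket $[d_m,h_{n+\frac12}]$ carries no central term and so contributes nothing new. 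This yields the first two relations of $(\ast)$.

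Next I would turn to the left-symmetric identity, where the decisive simplification is that a $\mathbf{c}$ can only be created by a $d_md_n$ product (through $\omega$) and an $\mathbf{l}$ only by an $h_{m+\frac12}h_{n+\frac12}$ product (through $\rho$). Tracking where such a product can sit inside an associator shows that only the basis types $(d,d,d)$, $(d,h,h)$ (paired with $(h,d,h)$), $(h,h,d)$, and $(h,h,h)$ carry nonzero central parts, the types $(d,d,h)$ and $(d,h,d)$ producing none. Reading off the $\mathbf{c}$-component of $(d_n,d_m,d_k)=(d_m,d_n,d_k)$ and using $f(n,m)-f(m,n)=n-m$ gives the third relation. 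From the identity with first two slots $\{d,h\}$ and third slot $h$, the $\mathbf{c}$-component collapses to $a(m,k)\omega(n,m+k)=0$ and the $\mathbf{l}$-component, after substituting $g(n,m)-h(m,n)=-(m+\frac12)$ and relabelling, to $g(m,k)\rho(n,m+k)-(n+\frac12)\rho(m+n,k)=0$; these are the fifth and fourth relations. The identity with slots $\{h,h\},d$ gives a $\mathbf{c}$-equation that is automatic from $a(m,n)=a(n,m)$ and the $\mathbf{l}$-equation $h(n,k)\rho(m,n+k)=h(m,k)\rho(n,m+k)$, and $\{h,h\},h$ gives, after cancelling the symmetric $b$-terms, $b(m,k)\rho(n,m+k)=b(n,k)\rho(m,n+k)$. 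Since each step is an equivalence, reversing it supplies the converse direction.

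The analytic content here is negligible; the work is combinatorial bookkeeping, and this is where I expect the only real difficulty. The delicate point is to enumerate all associator identities and isolate their central parts without double-counting—in particular to recognise that $(d,h,h)$ pairs with $(h,d,h)$ rather than being self-paired, that both sides of that identity can a priori carry $\mathbf{c}$- and $\mathbf{l}$-terms, and then to use the already-proved $(\star)$-relations (the antisymmetry of $f$, the $g$–$h$ relation, and the symmetry of $a,b$) to reduce the raw central equations to the exact form listed in $(\ast)$. This reduction, together with the index relabellings $m\leftrightarrow n$, is where an error is most likely, so I would organise the computation strictly by basis type and carry only the $\mathbf{c}$- and $\mathbf{l}$-coefficients.
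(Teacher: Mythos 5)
Your proposal is correct and follows essentially the same route as the paper: it verifies the compatibility condition on the $\mathbf{c}$- and $\mathbf{l}$-components to obtain the first two relations, then runs the left-symmetric identity over the triple types $(d,d,d)$, $(d,h,h)/(h,d,h)$, $(h,h,d)$, $(h,h,h)$ and reads off the central components, using the $(\star)$-relations (antisymmetry of $f$, the $g$--$h$ relation, symmetry of $a,b$) to reduce them to exactly $(\ast)$. The paper handles the non-central components implicitly through the preceding lemma on $\widetilde{\mathfrak{D}}$, which is precisely your reduction modulo $Z(\mathfrak{D})$, so the two arguments coincide in substance.
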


  \begin{proof}
 \begin{enumerate}

 \item[{\rm (i)}] On the one hand,
$ [d_{m}, d_{n}]=(m-n)d_{m+n}+\frac{m^{3}-m}{12}\delta_{m+n}, _{0}{\bf c},$
    and
    on the other hand,
    \begin{center}
      $[d_{m}, d_{n}]=d_md_n-d_nd_m
                    =f(m, n)d_{m+n}+\omega(m, n){\bf c}-f(n, m)d_{m+n}-\omega(n, m){\bf c}.$
    \end{center}
    Comparing the two above formulas, we obtain
    $\omega(m, n)-\omega(n, m)=\frac{m^3-m}{12}\delta_{m+n,0}.$

    \item[{\rm (ii)}] On the one hand,
    $[h_{m+\frac12}, h_{n+\frac12}]=(m+\frac12)\delta_{m+n+1,0}{\bf l},$
and   on the other hand,
    \[ [h_{m+\frac12}, h_{n+\frac12}]=\rho(m, n){\bf l}-\rho(n, m){\bf l}.
    \]
    Comparing the two above formulas, we obtain
    \[\rho(m, n)-\rho(n, m)=(m+\frac12)\delta_{m+n+1, 0}.\]

    \item[{\rm (iii)}]     By the left-symmetric identity  $(d_m, d_n, d_k)=(d_n, d_m, d_k)$, we have
    \begin{longtable}{rcl}
      $f(n, m)\big(f(n+m, k)d_{n+m+k}$&$+$&$\omega({n+m}, k){\bf c}\big)-$\\
      &$-$&$f(m, k)\big(f(n, m+k)d_{m+n+k}+\omega(n, {m+k}){\bf c}\big)
      =$\\
      $f(m, n)\big(f(n+m, k)d_{n+m+k}$&$+$&$\omega({n+m}, k){\bf c}\big)-$\\
      &$-$&$f(n, k)\big(f(m, n+k)d_{m+n+k}+\omega(m, {n+k}){\bf c}\big).$
    \end{longtable}
    Hence, \[f(n, k)\omega(m, {n+k})-f(m, k)\omega(n, {m+k})=(m-n)\omega({n+m}, k).\]

    \item[{\rm (iv)}]     By the left-symmetric identity  $(d_n, h_{m+\frac12}, h_{k+\frac12})=(h_{m+\frac12}, d_n, h_{k+\frac12})$, we have
  \begin{flushleft}
      $g(n, m)\big(a(n+m, k)d_{m+n+k}+b(n+m, k)h_{m+n+k+\frac12}+\rho(m+n, k){\bf l}\big) -$
  \end{flushleft}
\begin{flushright}
    $-a(m, k)\big(f(n, m+k)d_{m+n+k}+\omega(n, {m+k}){\bf c}\big)-b(m, k)g(n, m+k)h_{m+n+k+\frac12}\ =$\end{flushright}
\begin{flushleft}
         $h(m, n)\big(a(n+m, k)d_{m+n+k}+b(n+m, k)h_{m+n+k+\frac12}+\rho(m+n, k){\bf l}\big) - $
\end{flushleft}
\begin{flushright}
    $-g(n, k)\big(a(m, n+k)d_{m+n+k}+b(m, n+k)h_{m+n+k+\frac12}+\rho(m, n+k){\bf l}\big).$\end{flushright}
   Therefore,
   \begin{center}
   $g(n, k)\rho(m, n+k)=(m+\frac12)\rho(m+n, k),$ and $
     a(m, k)\omega(n, {m+k})=0.$\end{center}

  \item[{\rm (v)}]     By the left-symmetric identity  $(h_{n+\frac12}, h_{m+\frac12}, d_{k})=(h_{m+\frac12}, h_{n+\frac12}, d_k)$, we obtain
\begin{flushleft}
    $h(m, k)\big(a(n, m+k)d_{m+n+k}+b(n, m+k)h_{m+n+k+\frac12}+\rho(n, m+k){\bf l}\big)\ =$
\end{flushleft}
\begin{flushright}
         $= \ h(n, k)\big(a(m, n+k)d_{m+n+k}+b(m, n+k)h_{m+n+k+\frac12}+\rho(m, n+k){\bf l}\big).$
         \end{flushright}
   Therefore, \[
     h(n, k)\rho(m, n+k)=h(m, k)\rho(n, m+k).
   \]

   \item[{\rm (vi)}]     By the left-symmetric identity  $(h_{n+\frac12}, h_{m+\frac12}, h_{k+\frac12})=(h_{m+\frac12}, h_{n+\frac12}, h_{k+\frac12})$, we have
       \begin{flushleft}
         $b(m, k)\big(a(n, m+k)d_{m+n+k}+b(n, m+k)h_{m+n+k+\frac12}+\rho(n, m+k){\bf l}\big) \ =$ \end{flushleft}
\begin{flushright}
    $= \ b(n, k)\big(a(m, n+k)d_{m+n+k}+b(m, n+k)h_{m+n+k+\frac12}+\rho(m, n+k){\bf l}\big).$
\end{flushright}
       Hence,
       $       b(n, k)\rho(m, n+k)=b(m, k)\rho(n, m+k).$

       \end{enumerate}
   Therefore, the conclusion holds.
 \end{proof}
   By Theorem \ref{t5.2}, in order to obtain a compatible left-symmetric algebra structure on $\mathfrak{D}$ with the grading condition ($\diamond$), one can suppose
   \begin{eqnarray}
   f(m, n)=-\frac{n(1+\epsilon n)}{1+\epsilon(m+n)}  \label{5}.
   \end{eqnarray}
Furthermore, the following theorem determines all compatible left-symmetric algebraic structures on $\mathfrak{D}$ that satisfies $f(m,n)$ defined by formula (\ref{5}).
 \begin{theorem}\label{t5.4}
 Any compatible left-symmetric algebraic structure on the mirror Heisenberg-Virasoro algebra  satisfying the relation $(\diamond)$ is given by the following functions:
  \begin{longtable}{rclrcl}
  $f(m, n) $&$=$&$-\frac{n(1+\epsilon n)}{1+\epsilon(m+n)},$ &
  $ a(m, n)$&$=$&$0,$\\

  $g(m, n)$&$=$&$-(n+\frac12),$ &
  $b(m, n)$&$=$&$0,$ \\
    $\rho(n, m) $&$=$&$\frac12(n+\frac12)\delta_{n+m+1,0},$ &
      $h(m, n)$&$=$&$0,$\\

    $\omega(m, n)$&$=$&\multicolumn{4}{l}{$\frac{1}{24}(m^3-m+(\epsilon-\epsilon^{-1})m^2)\delta_{m+n,0},$}\\
  \end{longtable}
  where $\epsilon\neq 0$ and $\epsilon^{-1} \notin \mathbb{Z}.$
 \begin{proof}
 For any $m, n\in\mathbb{Z}$, let
 \[
 G(m, n)=\frac{1+\epsilon(m+n)}{1+\epsilon n}g(m, n),\quad H(n, m)=\frac{1+\epsilon(m+n)}{1+\epsilon m}h(n, m),
 \]
 then by ($\star$), we have
 \begin{eqnarray}
   (1+\epsilon n)G(m, n)-(1+\epsilon m)H(n, m)=-(n+\frac12)(1+\epsilon(m+n)), \label{6}\\
   G(n, k)G(m, n+k)-G(m, k)G(n, m+k)=(m-n)G(m+n, k), \label{7}\\
   H(n, k)G(m, n+k)+kH(n, m+k)=-(n+\frac12)H(m+n, k). \label{8}
 \end{eqnarray}
 Taking $m=0$ in (\ref{6}) and (\ref{8}), there is
 \begin{longtable}{lcl}
   $(1+\epsilon n)G(0, n)-H(n,0)$&$=$&$-(n+\frac12)(1+\epsilon n),$\\
   $H(n, k)G(0, n+k)+kH(n, k)$&$=$&$-(n+\frac12)H(n, k).$
 \end{longtable}
 \noindent Taking $n=n+k$ in the first formula, then
 \[
 G(0, n+k)=\frac{H(n+k,0)}{1+\epsilon(n+k)}-(n+k+\frac12).
 \]
 Then it is plugged into the second formula, and we obtain
 \[
 \frac{H(n, k)H(n+k,0)}{1+\epsilon(n+k)}-(n+k+\frac12)H(n, k)+kH(n, k)=-(n+\frac12)H(n, k).
 \]
 Therefore, $H(n, k)H(n+k,0)=0$. Taking $k=0$, then $H(n,0)^2=0$, i.e.
 \begin{center}
     $H(n, 0)=0$ and $G(0, n)=-(n+\frac12).$
 \end{center}
  Taking $k=0, m=-n$ in (\ref{7}), then
 \begin{eqnarray}
 G(n, 0)G(-n, n)-G(-n, 0)G(n, -n)=-2nG(0,0)=n.  \label{9}
 \end{eqnarray}
And by (\ref{6}), we have
\begin{eqnarray}
H(n, m)=\frac{1+\epsilon n}{1+\epsilon m}G(m, n)+\frac{(n+\frac12)(1+\epsilon(m+n))}{1+\epsilon m}. \label{10}
\end{eqnarray}
 Taking $n=0, m=-k$ in (\ref{8}), then
$H(0, k)G(-k, k)+kH(0,0)=-\frac12H(-k, k).$
Hence, $H(-k, k)=-2H(0, k)G(-k, k)$. On the one hand, by virtue of (\ref{10}),
\[
H(-k, k)=\frac{1-\epsilon k}{1+\epsilon k}G(k, -k)+\frac{-k+\frac12}{1+\epsilon k}.
\]
On the other hand, by (\ref{9}) and (\ref{10}), we have
\[
G(k,0)G(-k, k)=G(-k,0)G(k, -k)+k,\quad H(0, k)=\frac{G(k,0)}{1+\epsilon k}+\frac12.
\]
So
\begin{longtable}{lcl}
  $H(-k, k)$&$=$&$-2H(0, k)G(-k, k) \ =\ -2\left(\frac{G(k,0)}{1+\epsilon k}+\frac12\right)G(-k, k) = $\\

          & $=$ & $\frac{-2G(k,0)G(-k, k)}{1+\epsilon k}-G(-k, k)\
          = \ \frac{-2(G(-k,0)G(k, -k)+k)}{1+\epsilon k}-G(-k, k) \ = $\\
          &$=$ &$\frac{-2G(-k,0)G(k, -k)}{1+\epsilon k}-\frac{2k}{1+\epsilon k}-G(-k, k).$
\end{longtable}
\noindent Comparing the above two formulas, we obtain
\begin{center}
$(1-\epsilon k)G(k,-k)+(-k+\frac12)=-2G(-k,0)G(k,-k)-2k-(1+\epsilon k)G(-k, k).$
\end{center}
Therefore,
\begin{eqnarray}
(1+2g(-k,0))g(k,-k)+g(-k, k)=-(k+\frac12). \label{11}
\end{eqnarray}
According to $G(0, n)=-(n+\frac12)$, $g(0, n)=-(n+\frac12)$. Therefore, put $$g(m, n)=mg'(m, n)-(n+\frac12).$$ Then
\begin{longtable}{lcllcl}
  $h(n, m)$&$=$&$mg'(m, n),$ &
  $g(-k, 0)$&$=$&$-kg'(-k,0)-\frac12,$\\
  $g(k, -k)$&$=$&$k(g'(k, -k)+1)-\frac12,$ &
  $g(-k, k)$&$=$&$-k(g'(-k, k)+1)-\frac12.$
\end{longtable}
Let  $ f(k)=g'(k, -k)+1,$ then $g(k, -k)=kf(k)-\frac12. $
It is plugged into (\ref{11}). Then
\[
kg'(-k,0)(-2kf(k)+1)=k(f(-k)-1).
\]

Let us define two important subsets of $\mathbb Z:$
\begin{longtable}{lcl}

$A$ & $=$ & $\{ m \ | \  f(m)=\frac{1}{2m} \mbox{ \ and \ }f(-m)=1 \},$\\
$B$ & $=$ & $\{ m \ | \ f(m)=1   \mbox{ \ and \ }  f(m)\neq -\frac{1}{2m}\}.$\\
\end{longtable}
It is easy to see that  $A \cap -A = \varnothing.$
Let $A=\{m_0, m_1, \ldots, m_i, \ldots \}.$
Since~$g(0, 0)=-\frac12$,~then~$m_i\neq 0.$~
Namely, if $m_i \in A \cap -A ,$ then
\begin{center}
$f(m_i)=\frac{1}{2m_i},$ \ $f(-m_i)=1,$ \
$f(-m_i)=-\frac{1}{2m_i},$ \ $f(m_i)=1.$\end{center}

Obviously, this is a contradiction. In this case,
\begin{longtable}{clclc}
  $g(m_i, -m_i)$&$=$&$m_if(m_i)-\frac12 $&$=$&$0,$\\
  $g(-m_i, m_i)$&$=$&$-m_if(-m_i)-\frac12$&$=$&$-(m_i+\frac12),$
\end{longtable}
and
\begin{longtable}{lcllcl}
  $h(m_i, -m_i)$&$=$&$0, $& \ \
  $h(-m_i, m_i)$&$=$&$-m_i+\frac12.$
\end{longtable}
Inside of the proof of our Theorem,
we highlight two following lemmas,
which gives the characterization of sets $A$ and $B$.
Namely, we have to prove that $A=\varnothing$ and $B=\mathbb Z.$

\begin{lemma}
$A=\varnothing.$
\end{lemma}

\begin{proof}
    Set $k\in\mathbb{Z}\backslash\{-A \cup B\}$, there is
\begin{longtable}{lcllcl}
 $h(0, k) $&$= $&$\frac{k(f(k)-1)}{2kf(-k)+1},$ &
 $h(-k, k)$&$=$&$k(f(k)-1),$\\
 $h(k, -k) $&$=$&$-k(f(-k)-1),$ &
 $h(k, 0)$&$=$&$0,$
 \end{longtable}
 and
 \begin{longtable}{lcllcllcl}
 $g'(k, 0)$&$=$&$\frac{f(k)-1}{2kf(-k)+1},$&
   $g'(-k, k)$&$=$&$f(-k)-1,$&
   $g'(k, -k)$&$=$&$f(k)-1.$
 \end{longtable}
 \noindent Therefore, by the first formula, and then set~$$g'(m, n)=\frac{g''(m, n)(f(m)-1)}{2mf(-m)+1},  \  m\in\mathbb{Z}\backslash\{-A \cup B\},$$
in this case,
 \begin{longtable}{lcllcl}
   $g''(m, 0)$&$=$&$1,$ &
  $g'(k, -k)$&$=$&$\frac{g''(k, -k)(f(k)-1)}{2kf(-k)+1}.$
 \end{longtable}
\noindent The last gives,
\begin{center}
    $g''(k, -k)=2kf(-k)+1, \  k\in\mathbb{Z}\backslash\{-A \cup B\}.$
    \end{center}

 \noindent Then by $g''(m, 0)=1$, set~$g''(m, n)=ng'''(m, n)+1$.
 Therefore, $$g''(k, -k)=-kg'''(k, -k)+1.$$ So $g'''(k, -k)=-2f(-k),$ and $g'''(m_i, -m_i)=-2f(-m_i)=-2.$
Hence, for~$m\in\mathbb{Z}$ and $ n\in\mathbb{Z}\backslash\{-A \cup B\}, $ we have
\[ h(m, n)=\frac{n(f(n)-1)(mg'''(n, m)+1)}{2nf(-n)+1}.\]
Therefore, \[ g(n, m)= \frac{n(f(n)-1)(mg'''(n, m)+1)}{2nf(-n)+1}-(m+\frac12).\]

If~$mg'''(n, m)+1\equiv0$, then~$g'''(n, m)=-\frac1m$. Therefore, $g'''(m_i, -m_i)=\frac1m_i$, which is contradict to~$g'''(m_i, -m_i)=-2f(-m_i)=-2$, then there are only two cases:

\begin{enumerate}
    \item[I.]
\ ``$mg'''(n, m)+1\equiv0$''  and~$A=\varnothing$. Then
$h(m, n)=0, ~g(m, n)=-(n+\frac12).$

\item[II.] \ ``$mg'''(n, m)+1$''~is not constant to~0.

 If~$A\neq\varnothing$~, then at this point, $g(m, n)=mg'(m, n)-(n+\frac12), m, n\in\mathbb{Z}$. Taking $m=0$ in the fourth formula of ($\ast$), then there is
     \[
     (n+k+1)\rho(n, k)=0.
     \]

     When\,$n+k+1\neq0$, $\rho(n, k)=0$. Let $\rho(n, k)=\theta(n)\delta_{n+k+1,0}$ and put it into the fourth formula of ($\ast$):
      \[g(m, k)\theta(n)\delta_{n+m+k+1,0}-(n+\frac12)\theta(m+n)\delta_{n+m+k+1,0}=0.\]

      When $m\in\mathbb{Z}\backslash\{-A \cup B\}$, according to the previous proof,
      \[ g(m, k)= \frac{m(f(m)-1)(kg'''(m, k)+1)}{2mf(-m)+1}-(k+\frac12),\]
      then the above equation becomes
      \begin{center}
          $\Big( \big(\frac{m(f(m)-1)(kg'''(m, k)+1)}{2mf(-m)+1}-(k+\frac12)\big)\theta(n)-(n+\frac12)\theta(m+n) \Big)\delta_{n+m+k+1,0}=0. $     \end{center}
      Taking~$m+n+k+1=0,$ where $m\in\mathbb{Z}\backslash\{-A \cup B\}$,
      then the above equation becomes
      \begin{center}
      $\big(\frac{m(f(m)-1)(kg'''(m, k)+1)}{2mf(-m)+1}-(k+\frac12)\big)\theta(n)=(n+\frac12)\theta(m+n).$\end{center}
      Making~$n=0$,~by $m+k+1=0,$ $k=-1-m$, where   $m\in\mathbb{Z}\backslash\{-A \cup B\}$, then the above equation becomes
      \begin{center}
          $ \frac{m(f(m)-1)}{2mf(-m)+1}\big((-1-m)g'''(m, -1-m)+1\big)\theta(0)-\frac12\theta(m)=-(m+\frac12)\theta(0).$
      \end{center}
      Therefore, when~$m=m_i$, then the above equation becomes
     \begin{eqnarray}
       \frac{m_i(f(m_i)-1)}{2m_if(-m_i)+1}\big((-1-m_i)g'''(m_i, -1-m_i)+1\big)\theta(0)-\frac12\theta(m_i)=-(m_i+\frac12)\theta(0) \label{16}.
     \end{eqnarray}

 In the sixth formula of ($\ast$), set   $k\in\mathbb{Z}\backslash\{-A \cup B\}$, and from the previous proof, the sixth formula changes to:
    \begin{center}
    $\frac{k(f(k)-1)(ng'''(k, n)+1)}{2kf(-k)+1}\theta(m)\delta_{m+n+k+1, 0}=\frac{k(f(k)-1)(mg'''(k, m)+1)}{2kf(-k)+1}\theta(n)\delta_{m+n+k+1, 0}.$
    \end{center}

 Taking~$m+n+k+1=0$, then the above equation becomes
      \begin{center}
          $(ng'''(k, n)+1)\theta(m)=(mg'''(k, m)+1)\theta(n).$
      \end{center}
      Let~$n=0$. Then $k=-1-m$. Since~ $k\in\mathbb{Z}\backslash\{-A \cup B\}$, then
        \begin{center}$\theta(m)=(mg'''(k, m)+1)\theta(0).$  \end{center}
     Therefore, when~$k=m_i$,
      \begin{center}
          $\theta(-1-m_i)=((-1-m_i)g'''(m_i, -1-m_i)+1)\theta(0).$
      \end{center}

     Therefore,  the equation~(\ref{16}) becomes
     \begin{eqnarray}
       \frac{m_i(f(m_i)-1)}{2m_if(-m_i)+1}\theta(-1-m_i)-\frac12\theta(m_i)=-(m_i+\frac12)\theta(0) \label{17}.
     \end{eqnarray}

       Place~$\rho(n, k)=\theta(n)\delta_{n+k+1,0}$~into the second formula of ($\ast$). Then there is
       \begin{center}
           $\theta(m)\delta_{m+n+1, 0}-\theta(n)\delta_{m+n+1, 0}=(m+\frac12)\delta_{m+n+1, 0},$
       \end{center}
       taking ~$m+n+1=0$, then~$n=-1-m$, so  the above equation becomes
        \begin{eqnarray}
\theta(-1-m)-\theta(m)=- \Big(m+\frac12 \Big), \ m\in\mathbb{Z}.\label{theta(-1-m)}
        \end{eqnarray}
         In particular, taking~$m=m_i$~and~$m=0$,~then
      \begin{eqnarray}
        \theta(-1-m_i)-\theta(m_i)=-\Big(m_i+\frac12 \Big) \label{18},
      \end{eqnarray}
      and $\theta(-1)=\theta(0)-\frac12.$

     From the equation~(\ref{17})~and~(\ref{18}), there is
      \begin{eqnarray}
       \Big(\frac{m_i(f(m_i)-1)}{2m_if(-m_i)+1}-\frac12\Big)\theta(-1-m_i)=
       \Big(\frac12-\theta(0)\Big) \Big(m_i+\frac12 \Big) \label{19}.
     \end{eqnarray}
      By~$f(m_i)=\frac{1}{2m_i}$~~$f(-m_i)=1$, then
      \begin{eqnarray}
        \theta(-1-m_i)=-{m_i^{-1}}{\Big(m_i+\frac12\Big)^2}\Big(\frac12-\theta(0)\Big) \label{20}.
      \end{eqnarray}

     From the fourth formula of ($\ast$), there is
     \[g(m, k)\theta(n)\delta_{m+n+k+1, 0}-(n+\frac12)\theta(m+n)\delta_{m+n+k+1, 0}=0.\]
      Taking~$m+n+k+1=0$,~then
      $g(m, k)\theta(n)=(n+\frac12)\theta(m+n),$
     and by~$g(-m_i, m_i)=-m_i+\frac12$, taking~$m=-m_i, k=m_i, n=-1$,~then the above equation becomes
  \begin{center}
  $g(-m_i, m_i)\theta(-1)=-\frac12\theta(-1-m_i),$
  \end{center}
     that is
     \begin{center}
         $-(m_i+\frac12)(\theta(0)-\frac12)=-\frac12\theta(-1-m_i).$
         \end{center}
      Therefore,
     \begin{eqnarray}
       \theta(-1-m_i)=-2\Big(m_i+\frac12 \Big)\Big(\frac12-\theta(0)\Big)\label{21}.
     \end{eqnarray}

    If~$\theta(0)=\frac12$,
then by (\ref{theta(-1-m)}), we have  $\theta(-1)=0$.
On the other hand, in the fourth formula of~($\ast$),  for $m=1,$ $k=1, n=-1$,~we have
    $\theta(0)=-2g(1, -1)\theta(-1)=0.$
The last gives a contradiction and~$\theta(0)\neq\frac12.$
Then comparing the equation (\ref{20}) with (\ref{21}),
we have $m_i=\frac 12.$ But on the other hand, $m_i \in \mathbb Z.$
So~$A=\varnothing$.

\end{enumerate}
\end{proof}

\begin{lemma}
    $B=\mathbb Z.$

\end{lemma}

\begin{proof}
If $\mathbb{Z} \neq B$ and~$A=\varnothing$, then
 for~$m\in\mathbb{Z}\backslash B$,
 the formula~(\ref{17}), (\ref{18})~and (\ref{19}) hold. For formula~(\ref{19}), there is
       \begin{center}
       $\Big(\frac{m(f(m)-1)}{2mf(-m)+1}-\frac12\Big)\theta(-1-m)=
       \Big(\frac12-\theta(0)\Big)\Big(m+\frac12\Big).$
     \end{center}
     Then
    \begin{eqnarray}
      \theta(-1-m)=\frac{(m+\frac12)(2mf(-m)+1)}{m(f(m)-f(-m))-(m+\frac12)} \Big(\frac12-\theta(0)\Big), \    \forall~ m\in\mathbb{Z}\backslash B. \label{22}
    \end{eqnarray}

\noindent      By the fourth formula of ($\ast$), there is
     \[g(m, k)\theta(n)\delta_{m+n+k+1, 0}-\Big(n+\frac12 \Big)\theta(m+n)\delta_{m+n+k+1, 0}=0.\]
     Taking~$m+n+k+1=0$, and taking~$m=-m, k=m, n=-1$, then
      \begin{center}
          $g(-m, m)\theta(-1)=-\frac12\theta(-1-m).$
      \end{center}
      By  $g(-m, m)=-mf(-m)-\frac12$, then the above equation becomes
      \begin{eqnarray}
        \theta(-1-m)=-(2mf(-m)+1) \Big(\frac12-\theta(0)\Big),  \ \forall~m\in\mathbb{Z}\backslash B \label{23}.
      \end{eqnarray}
      Comparing~the equation (\ref{22})~and~(\ref{23}), there is
\begin{center}
    ${m+\frac12}{}=-(m(f(m)-f(-m))-(m+\frac12)),  \  \forall~m\in\mathbb{Z}\backslash  B.$
\end{center}
Obviously, if $m\in\mathbb{Z}\backslash B,$~$f(m)\neq1$, there must be~$f(m)=f(-m)$.
It is also easy to see that for all $m \in \mathbb Z,$ we have $f(m)=f(-m)$.
Then
     \begin{center}
         $\theta(-1-m)=-(2mf(m)+1)\Big(\frac12-\theta(0)\Big),\ \forall~m\in\mathbb{Z}\backslash B.$
     \end{center}
Similarly to the previous discussion of~$\theta(0)$, we have that  $\theta(0)\neq\frac12$. Then
\begin{center}
    $ \theta(-1-m)=(2mf(m)+1)\theta(-1), \ \forall~m\in\mathbb{Z}\backslash B.$
\end{center}
At this time, for~$n\in\mathbb{Z}\backslash B,$ we have
\begin{longtable}{lcllcllcl}
  $h(-n, n)$&$=$&$n(f(n)-1),$ & \
  $g(n, -n)$&$=$&$nf(n)-\frac12,$& \
  $g(n, 0)$&$=$&$-\frac{n+\frac12}{2nf(n)+1}.$
\end{longtable}
In the seventh formula of~($\star$), taking~$m=n,$ $k=-n,$ $n=n,$ $n\in\mathbb{Z}\backslash B$,~then
\begin{center}
$h(n, -n)g(n, 0)-f(n, -n)h(n, 0)=-\Big(n+\frac12 \Big)h(2n, -n).$\end{center}
Therefore,
\begin{longtable}{lcl}
$h(2n, -n) $&$=$&$-\frac{n(f(n)-1)}{2nf(n)+1},$\\
$g(-n, 2n)$&$=$&$-\frac{n(f(n)-1)}{2nf(n)+1}-(2n+\frac12).$
\end{longtable}

\noindent In the seventh formula of~($\star$), taking~$m=2n, k=-n, n=-n, n\in\mathbb{Z}\backslash B$,~then
\begin{center}
$h(2n, -n)g(-n, n)-f(-n, -n)h(2n, -2n)=-(2n+\frac12)h(n, -n).$
\end{center}
By calculating,
\begin{center}
   $-\frac{1-\epsilon n}{1-2\epsilon n}h(2n, -2n)=2n(f(n)-1).$
\end{center}

If~$2n\in B$,~then~$h(2n, -2n)=0$.~While~$f(n)\neq1$,~so the two sides of the above formula are not equal. Therefore, $2n\in\mathbb{Z}\backslash B$. Then the above formula becomes
\begin{eqnarray}
  \frac{(f(2n)-1)(1-\epsilon n)}{1-2\epsilon n}=f(n)-1.  \label{24}
\end{eqnarray}
From this formula, $f(n)\neq0$.
In the fourth formula of~($\ast$), taking~$m=-n,$ $k=2n$ and $n=-1-n,$~then
\begin{center}
    $g(-n, 2n)\theta(-1-n)=-\Big(n+\frac12\Big)\theta(-1-2n).$
    \end{center}
By calculating,
\begin{align*}
  &\text{the left end of equation}=\Big(n+\frac12 \Big)(4nf(n)+1),\\
  &\text{the right end of equation}=\Big(n+\frac12 \Big)(4nf(2n)+1).
\end{align*}
Therefore, $f(n)=f(2n).$ And by~$\epsilon \neq 0$, then the two sides of equation (\ref{24}) are not equal.
 Therefore, for $m\in\mathbb{Z}, f(m)$ can only be equal to 1.

\end{proof}

{\bf Continuation of the proof of Theorem \ref{t5.4}.}
Now we have $A=\varnothing$ and $B=\mathbb Z.$
At this point, for~$k\in\mathbb{Z}$, there is
 \begin{center}
     $g(-k, k)=-(k+\frac12)$ and  $h(-k, k)=0.$
 \end{center}
 In the seventh formula of~($\star$), taking~$n=-m-k$, then
 \begin{center}
 $h(m, k)g(-m-k, m+k)-f(-m-k, k)h(m, -m)=-(m+\frac12)h(-k, k).$
 \end{center}
 Therefore, for~$m, k\in\mathbb{Z}$,~there is
 $h(m, k)=0$
 and
 $g(k, m)=-(m+\frac12).$

 Nextly, the expressions for $\rho(m, n)~(m, n\in\mathbb{Z})$ are determined.
Taking $m=0$ in the fourth equation of ($\ast$), we have
     $
     (n+k+1)\rho(n, k)=0.$
     When $n+k+1\neq0$, $\rho(n, k)=0$.
     It is plugged into the fifth equation of ($\ast$):
     \begin{center}
     $-\Big(k+\frac12\Big)\theta(n)\delta_{m+n+k+1,0}-\Big(n+\frac12\Big)\theta(m+n)\delta_{m+n+k+1,0}=0.
     $\end{center}
      Taking $m+n+k+1=0$, we have
    \begin{center}
        $(m+n+\frac12)\theta(n)=(n+\frac12)\theta(m+n).$
    \end{center}
    Let $n=1$, we have
    \begin{center}
    $\theta(m+1)=(\frac23 m+1)\theta(1)=(\frac23(m+1)+\frac13)\theta(1).$
    \end{center}
    Taking $m$ into $m-1$, there is
    \begin{center}
    $\theta(m)=\theta((m-1)+1)=\Big(\frac23((m-1)+1)+\frac13\Big)\theta(1)=(\frac23m+\frac13)\theta(1).$
    \end{center}
    It is plugged into the second equation of ($\ast$): taking $m+n+1=0$, we have
    \begin{center}
  $  \Big(\frac23 m+\frac13\Big)\theta(1)-\Big(\frac23 n+\frac13\Big)\theta(1)=m+\frac12.$
  \end{center}
    Hence, $\theta(1)=\frac34.$ Therefore, $\theta(n)=\frac12(n+\frac12).$ So
    \begin{center}
    $\rho(n, k)=\frac12(n+\frac12)\delta_{n+k+1,0}.$
    \end{center}

At last, the expressions for $a(m, n)$ and $b(m, n)$ are determined.
By the eighth equation of ($\star$), taking $n=0$, there is
\begin{center}
$-(m-\frac12)a(m, k)=-(m+\frac12)a(m, k).$
\end{center}
Therefore, $a(m, k)=0.$ Again, by the ninth equation of ($\star$), taking $n=0$, we have
\begin{center}
$-mb(m, k)=-(m+\frac12)b(m, k).$
\end{center}
Therefore, $b(m, k)=0.$

In turn, it is easy to verify that the function of the Theorem\,\ref{T5.6} satisfies ($\star$)  and ($\ast$).

   \end{proof}
\end{theorem}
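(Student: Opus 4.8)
The plan is to read off the unknown structure constants $f,g,h,a,b,\omega,\rho$ appearing in $(\diamond)$ as the solutions of the functional equations forced by the two defining features of a compatible left-symmetric structure on $\mathfrak{D}$: that the commutator $x\cdot y-y\cdot x$ reproduce the bracket of $\mathfrak{D}$, and that the left-symmetric associator identity of Definition \ref{5.1} hold on every triple of basis vectors. First I would expand both requirements block by block — on the $dd$, $dh$, $hd$ and $hh$ products, separating the parts that land in the center $\langle{\bf c},{\bf l}\rangle$ — turning the whole problem into a finite list of scalar recursions in the integer indices $m,n,k$.

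The first reduction is that the $dd$-block closes on the Virasoro subalgebra $\mathcal{V}=\langle d_n,{\bf c}\rangle$: under $(\diamond)$ the product $d_md_n$ has only a $d_{m+n}$ and a ${\bf c}$ component, so its restriction to $\mathcal{V}$ is itself a compatible left-symmetric structure of the graded shape (\ref{1}). Lemma \ref{t5.2} then applies verbatim and fixes
\[
 f(m,n)=-\frac{n(1+\epsilon n)}{1+\epsilon(m+n)},\qquad
 \omega(m,n)=\tfrac{1}{24}\bigl(m^3-m+(\epsilon-\epsilon^{-1})m^2\bigr)\delta_{m+n,0},
\]
with $\epsilon\neq0$, $\epsilon^{-1}\notin\mathbb{Z}$, collapsing the free data to the single scalar $\epsilon$. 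It remains to show that, with $f$ known, the mixed and $hh$ blocks are rigid.

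The main obstacle is the coupled system for $g$ and $h$. Conceptually, left multiplication $d_m\mapsto L_{d_m}$ is a representation of the sub-adjacent Witt algebra, so $g$ must satisfy the module cocycle identity $g(m,k)g(n,m+k)-g(n,k)g(m,n+k)=(n-m)g(m+n,k)$, while $h$ is coupled to $g$ both through the bracket relation $g(m,n)-h(n,m)=-(n+\tfrac12)$ and through the $(d,h,d)$ associator. To solve this I would divide out the known $f$, passing to $G(m,n)=\tfrac{1+\epsilon(m+n)}{1+\epsilon n}g(m,n)$ and $H(n,m)=\tfrac{1+\epsilon(m+n)}{1+\epsilon m}h(n,m)$ so that the identities become homogeneous; specializing the indices to $m=0$ and $k=0$ then forces the boundary values $H(n,0)=0$ and $G(0,n)=-(n+\tfrac12)$. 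The genuinely delicate point is eliminating spurious branches: after the substitution $g(m,n)=mg'(m,n)-(n+\tfrac12)$ everything reduces to constraints on the single diagonal function $\varphi(m):=g'(m,-m)+1$, and a case analysis organized around the exceptional index sets $A=\{m:\varphi(m)=\tfrac{1}{2m},\ \varphi(-m)=1\}$ and $B=\{m:\varphi(m)=1,\ \varphi(m)\neq-\tfrac{1}{2m}\}$ is required to prove $A=\varnothing$ and $B=\mathbb{Z}$, which is exactly what collapses the system to $g(m,n)=-(n+\tfrac12)$ and $h(m,n)=0$.

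Once $g$ and $h$ are pinned down the remaining blocks are linear. The $(d,h,h)$ associator relations carrying $a$ and $b$, evaluated at a vanishing index and with the known $f,g$ inserted, read $-(m-\tfrac12)a(m,k)=-(m+\tfrac12)a(m,k)$ and $-mb(m,k)=-(m+\tfrac12)b(m,k)$, forcing $a\equiv b\equiv0$. For $\rho$ I would combine the antisymmetry relation $\rho(m,n)-\rho(n,m)=(m+\tfrac12)\delta_{m+n+1,0}$ with the surviving $(d,h,h)$ associator relation; the latter first confines $\rho$ to the form $\theta(n)\delta_{m+n+1,0}$ and then yields a one-variable linear recursion for $\theta$, whose solution normalized by the antisymmetry relation is $\theta(n)=\tfrac12(n+\tfrac12)$, i.e. $\rho(m,n)=\tfrac12(n+\tfrac12)\delta_{m+n+1,0}$. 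Substituting all seven functions back into $(\diamond)$ reproduces the stated multiplication table, and a direct check that these constants satisfy each derived identity confirms that the product is genuinely left-symmetric, completing the proof.
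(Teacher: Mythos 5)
Your proposal is correct and follows essentially the same route as the paper's own proof: Lemma \ref{t5.2} fixes $f$ and $\omega$ through the Virasoro subalgebra, the normalizations $G(m,n)=\frac{1+\epsilon(m+n)}{1+\epsilon n}g(m,n)$ and $H(n,m)=\frac{1+\epsilon(m+n)}{1+\epsilon m}h(n,m)$ with boundary values $H(n,0)=0$, $G(0,n)=-(n+\frac12)$, the substitution $g(m,n)=mg'(m,n)-(n+\frac12)$ and the case analysis over the same exceptional sets $A$ and $B$ collapse the mixed block, and the same specializations ($n=0$ in the eighth and ninth relations of $(\star)$, the $\delta$-form and one-variable recursion for $\rho$) finish the argument. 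Two caveats: your closing formula $\rho(m,n)=\frac12(n+\frac12)\delta_{m+n+1,0}$ has the arguments switched — it contradicts the relation $\rho(m,n)-\rho(n,m)=(m+\frac12)\delta_{m+n+1,0}$, the correct expression being $\rho(m,n)=\frac12(m+\frac12)\delta_{m+n+1,0}$, i.e. $\theta$ is a function of the \emph{first} index — and, contrary to your sequential ordering (``first pin down $g,h$, then determine $\rho$''), the paper's proofs of $A=\varnothing$ and $B=\mathbb{Z}$ are not internal to the $(g,h)$-system $(\star)$: they already confine $\rho$ to the form $\theta(n)\delta_{n+k+1,0}$ and obtain the decisive contradictions (e.g. $m_i=\frac12\notin\mathbb{Z}$) from the center relations $(\ast)$ through $\theta$, so the determination of $\rho$ is interleaved with, not subsequent to, the elimination of the exceptional branches.
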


\subsection{Applications}

Let us remember a well-known statement about biderivations of
sub-adjacent and original algebras.

\begin{lemma}\label{L4.10}
   Let $\mathcal{A}$ be an algebra, and
   $\mathfrak L(\mathcal{A})$ be a sub-adjacent Lie algebra of $\mathcal{A}$.
   Then every biderivation of $\mathcal{A}$ is also a biderivation.
 \end{lemma}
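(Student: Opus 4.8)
The plan is to verify directly that a map $f$, assumed to be a biderivation of the algebra $\mathcal{A}$, satisfies the two defining identities of a biderivation of the sub-adjacent Lie algebra $\mathfrak{L}(\mathcal{A})$, whose bracket is the commutator $[x,y]=xy-yx$. Recall that a biderivation of $\mathcal{A}$ is by definition a bilinear map that is a derivation in each argument with respect to the algebra product, i.e. $f(xy,z)=f(x,z)\,y+x\,f(y,z)$ and $f(x,yz)=f(x,y)\,z+y\,f(x,z)$. The only tool needed is to expand each commutator and apply these two identities together with bilinearity.

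For the first Lie identity, I would compute
\[
f([x,y],z)=f(xy,z)-f(yx,z)=\big(f(x,z)\,y+x\,f(y,z)\big)-\big(f(y,z)\,x+y\,f(x,z)\big),
\]
and then regroup the four resulting terms as $\big(f(x,z)\,y-y\,f(x,z)\big)+\big(x\,f(y,z)-f(y,z)\,x\big)$, which is exactly $[f(x,z),y]+[x,f(y,z)]$, the required Lie-biderivation relation in the first argument.

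The second identity is handled in mirror-image fashion: expand $f(x,[y,z])=f(x,yz)-f(x,zy)$, apply the derivation property in the second argument to each piece, and regroup the four resulting terms into $[f(x,y),z]+[y,f(x,z)]$. Both computations use only bilinearity, the two algebra-biderivation axioms, and the definition of the commutator.

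There is essentially no obstacle here: the statement is a formal consequence of the definitions, and the entire content is the bookkeeping of signs when converting products into commutators. The only point requiring a little care is to keep the order of the factors straight, since $\mathcal{A}$ need not be commutative or associative, so that the four expanded terms regroup into commutators in precisely the pattern demanded by the Lie-biderivation identities of the opening Definition.
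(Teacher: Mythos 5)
Your proof is correct: expanding each commutator by bilinearity, applying the two derivation identities of $f$ with respect to the product of $\mathcal{A}$, and regrouping the four resulting terms gives exactly the two Lie-biderivation identities of $\mathfrak{L}(\mathcal{A})$, with no use of associativity or left-symmetry — only the definition $[x,y]=xy-yx$. The paper itself gives no proof of this lemma (it is quoted as a well-known fact), so your direct verification is precisely the routine argument the paper implicitly relies on, and it is consistent with how the lemma is later applied to the graded mirror Heisenberg--Virasoro left-symmetric algebra.
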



\begin{theorem}
   Any biderivations $f$ of the graded mirror Heisenberg-Virasoro left-symmetric algebra $\mathcal{A}$ is trivial, i.e. $f=0.$
   \begin{proof}
     First, the mirror Heisenberg-Virasoro algebra $\mathfrak{D}$ is the sub-adjacent Lie algebra of the left-symmetric algebra $\mathcal{A}$.
     Let $f$ be a biderivation of graded mirror Heisenberg-Virasoro left-symmetric algebra $\mathcal{A}$. Then by Lemma \ref{L4.10},  $f$ is also a biderivation of \,$\mathfrak{D}$. According to Theorem \ref{4.6}, there exists $\lambda\in\mathbb{C}$, such that $f(x, y)=\lambda[x, y]+\Upsilon_\Omega(x, y)$.
     Hence,
     \begin{longtable}{lcl}
       $f(d_2 d_1, d_3)$&$=$&$\lambda [d_2 d_1, d_3]+\Upsilon_\Omega(d_2d_1, d_3) \ = \ $ \\

                      &$=$ &$\lambda\left[-\frac{1+\epsilon}{1+3\epsilon}d_{3}, d_3\right]-\Upsilon_\Omega(\frac{1+\epsilon}{1+3\epsilon}d_{3}, d_3) \ = \ -\frac{1+\epsilon}{1+3\epsilon}\sum_{k\in \mathbb{Z}}(k+\frac12)\mu_kh_{k+\frac{13}{2}}.$
     \end{longtable}
     On the other hand,
      \begin{longtable}{lcl}
       $f(d_2 d_1, d_3)$&$=$&$d_2 f(d_1, d_3)+f(d_2, d_3) d_1 \ = $\\

                      &$=$&$d_2(\lambda[d_1, d_3]+\Upsilon_\Omega(d_1, d_3))+\big(\lambda[d_2, d_3]+\Upsilon_\Omega(d_2, d_3)\big)d_1 \ =\ $\\

                      &$=$&$-2\lambda d_2d_{4}+d_2\Upsilon_\Omega(d_1, d_3)-\lambda d_{5}d_1+\Upsilon_\Omega(d_2, d_3)d_1 \ =\ $\\
                      &$=$&$-\lambda(2d_2d_4+d_5d_1)+\sum_{k\in \mathbb{Z}}(k+\frac12)\mu_kd_2h_{k+\frac{9}{2}}+\sum_{k\in \mathbb{Z}}(k+\frac12)\mu_kh_{k+\frac{9}{2}}d_1 \ = \ $\\

                      &$=$&$\lambda\left(\frac{8(1+3\epsilon)}{1+6\epsilon}+\frac{1+\epsilon}{1+6\epsilon}\right)d_{6}-\sum_{k\in \mathbb{Z}}(k+\frac12)(k+5)\mu_kh_{k+\frac{13}{2}}.$
     \end{longtable}
     Comparing the above two formulas, we have
    $\lambda=0$
     and
     $\mu_k=0,\ {\rm i.e.}\  \Upsilon_\Omega=0.$
  Hence, $f=0.$
   \end{proof}
\end{theorem}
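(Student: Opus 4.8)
The plan is to exploit both structures that $\mathcal{A}$ carries: its left-symmetric product and its underlying Lie bracket. First I would invoke Lemma~\ref{L4.10}, which guarantees that a biderivation $f$ of the left-symmetric algebra $\mathcal{A}$ is automatically a biderivation of its sub-adjacent Lie algebra $\mathfrak{D}$. Theorem~\ref{4.6} then pins down the shape of $f$ completely: there is a constant $\lambda\in\mathbb{C}$ with $f(x,y)=\lambda[x,y]+\Upsilon_\Omega(x,y)$, where $\Upsilon_\Omega$ is supported only on pairs $(d_m,d_n)$ and there equals $\sum_k(k+\frac12)\mu_k h_{m+n+k+\frac12}$. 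The entire problem thus reduces to showing $\lambda=0$ and $\mu_k=0$ for all $k$.

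To force these, I would test the first-argument left-symmetric Leibniz rule $f(xy,z)=f(x,z)\,y+x\,f(y,z)$ on a single well-chosen triple, taking $x=d_2$, $y=d_1$, $z=d_3$, and computing $f(d_2d_1,d_3)$ in two independent ways. On one hand, the multiplication of Theorem~\ref{T5.6} collapses $d_2d_1$ to the scalar multiple $-\frac{1+\epsilon}{1+3\epsilon}d_3$, so by bilinearity $f(d_2d_1,d_3)=-\frac{1+\epsilon}{1+3\epsilon}f(d_3,d_3)$; since $[d_3,d_3]=0$ this is $-\frac{1+\epsilon}{1+3\epsilon}\sum_k(k+\frac12)\mu_k h_{k+\frac{13}{2}}$, a vector lying purely in the $h$-space. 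On the other hand, the Leibniz expansion $d_2\,f(d_1,d_3)+f(d_2,d_3)\,d_1$, after substituting the form of $f$ together with the structure constants $d_md_n$, $d_mh_{n+\frac12}=-(n+\frac12)h_{m+n+\frac12}$ and $h_rd_n=0$, produces a genuine $d_6$-component proportional to $\lambda$ plus an $h_{k+\frac{13}{2}}$-component carrying $k$-dependent coefficients.

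Matching the two expressions then splits into two clean comparisons. The $d_6$-coefficient occurs only in the Leibniz expansion and carries a nonzero rational function of $\epsilon$ as its factor, so equating it to zero immediately gives $\lambda=0$. With $\lambda=0$ in hand, the $h_{k+\frac{13}{2}}$-coefficients on the two sides become $-\frac{1+\epsilon}{1+3\epsilon}(k+\frac12)\mu_k$ and $-(k+\frac12)(k+\frac92)\mu_k$; since $k+\frac12\neq0$ for every integer $k$ and the two $k$-dependent prefactors cannot agree, each $\mu_k$ is forced to vanish. Hence $\Upsilon_\Omega=0$ and $f=0$.

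I expect the main obstacle to be purely computational: the careful bookkeeping of the mixed products $d_2h_{k+\frac92}$ and $h_rd_1$ under the left-symmetric multiplication (in particular, remembering that $h_rd_n=0$ in $\mathcal{A}$, so $f(d_2,d_3)\,d_1$ contributes nothing to the $h$-space), and the verification that the $\epsilon$-dependent prefactor multiplying $\lambda$ in the $d_6$-term, as well as the mismatch between the two $h$-prefactors, genuinely fails to vanish on the admissible range $\epsilon\neq0$, $\epsilon^{-1}\notin\mathbb{Z}$. Should a degenerate value of $\epsilon$ accidentally annihilate one of these factors for a single index, the same scheme applied to another triple $(d_a,d_b,d_c)$ recovers the missing vanishing, so the conclusion $f=0$ is unaffected.
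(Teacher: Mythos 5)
Your proposal is correct and follows essentially the same route as the paper's own proof: reduce via Lemma~\ref{L4.10} and Theorem~\ref{4.6} to the form $f(x,y)=\lambda[x,y]+\Upsilon_\Omega(x,y)$, then evaluate $f(d_2d_1,d_3)$ in the two ways dictated by the left-symmetric Leibniz rule and compare the $d_6$- and $h$-components. If anything, your bookkeeping is more careful than the paper's (your coefficient $k+\tfrac{9}{2}$ is the correct one where the paper prints $k+5$, and the $d_2d_4$ term should carry $8(1+4\epsilon)$ rather than the paper's $8(1+3\epsilon)$), and your closing remark about re-running the argument on another triple $(d_a,d_b,d_c)$ handles the isolated admissible values of $\epsilon$ (e.g.\ $\epsilon=-\tfrac{3}{11}$, for which the $d_6$-coefficient of $\lambda$ vanishes) that the paper's comparison silently ignores.
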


\end{document}